\documentclass[english,a4paper]{amsart}
\usepackage[T1]{fontenc}
\usepackage[latin9]{inputenc}
\usepackage{amsthm}
\usepackage{amsmath}
\usepackage{amssymb}
\usepackage{esint}
\usepackage{color}

\makeatletter
\theoremstyle{plain}
\newtheorem{thm}{\protect\theoremname}
  \theoremstyle{plain}
  \newtheorem{prop}[thm]{\protect\propositionname}
  \theoremstyle{plain}
  \newtheorem{lem}[thm]{\protect\lemmaname}
  \theoremstyle{remark}
  \newtheorem{rem}[thm]{\protect\remarkname}
  \theoremstyle{definition}
  \newtheorem{defn}[thm]{\protect\definitionname}

\DeclareMathOperator{\ric}{Ric_{\eta}}

\makeatother

\usepackage{babel}
  \providecommand{\definitionname}{Definition}
  \providecommand{\lemmaname}{Lemma}
  \providecommand{\propositionname}{Proposition}
  \providecommand{\remarkname}{Remark}
\providecommand{\theoremname}{Theorem}

\begin{document}

\title{Linear perturbation of the Yamabe problem on   manifolds
with boundary}

\author{Marco Ghimenti}
\address[Marco Ghimenti]{Dipartimento di Matematica
Universit\`a di Pisa
Largo Bruno Pontecorvo 5, I - 56127 Pisa, Italy}
\email{marco.ghimenti@dma.unipi.it }
\author{Anna Maria Micheletti}
\address[Anna Maria Micheletti]{Dipartimento di Matematica
Universit\`a di Pisa
Largo Bruno Pontecorvo 5, I - 56127 Pisa, Italy}
\email{a.micheletti@dma.unipi.it }

\author{Angela Pistoia}
\address[Angela Pistoia] {Dipartimento SBAI, Universt\`{a} di Roma ``La Sapienza", via Antonio Scarpa 16, 00161 Roma, Italy}
\email{angela.pistoia@uniroma1.it}

\thanks{The research that leads to the present paper was partially supported by  the group GNAMPA of Istituto Nazionale di Alta Matematica (INdAM). The first author is also partially supported by P.R.A., University of Pisa}

\begin{abstract}
We build blowing-up solutions for linear perturbation of the Yamabe problem on manifolds with boundary, provided the dimension of the manifold is $n\ge7$ and the trace-free part of the second fundamental form is non-zero everywhere on the boundary.

 \end{abstract}

\keywords{Yamabe problem, manifold with boundary, linear perturbation,  bubbling phenomena}

\subjclass{35J60, 53C21}

\maketitle

\section{Introduction}

Given $(M,g)$   a smooth compact Riemannian manifold without boundary, the Yamabe problem is to find, in the conformal class of $g$, a metric of constant scalar curvature. The geometric problem has a PDE formulation, i.e. the metric $\tilde g=u^{4\over n-2}g$ has the required properties if the  function $u$ is a smooth positive solution to the critical equation
\begin{equation}
L_{g}u=\kappa u^{n+2\over n-2}\ \hbox{in }M, \label{yamabe}
\end{equation}
for some constant $\kappa$. 
Here   $L_{g}:=\Delta_{g}-\frac{n-2}{4(n-1)}R_{g}$ is  the conformal Laplacian,
$\Delta_{g}$ is
the Laplace Beltrami operator and $R_{g}$ is the scalar curvature of $(M,g).$
Solutions to \eqref{yamabe} are critical points of the functional
$$E(u):={\int\limits_M  \left(|\nabla u|^2+{n-2\over4(n-1)}R_gu^2\right)dv_g\over\left(\int\limits_{\partial\Omega}|u|^{2n\over n-2}d\sigma\right)^{n-2\over n}}, \ u\in H^1_g(M),$$
were $dv_g$  denotes the volume form  on $M$ and $\partial M.$
The exponent $2n\over n-2$ is critical for the Sobolev embedding $H^1_g(M)\hookrightarrow L^{2n\over n-2}(\partial M) .$  
The existence of a minimizing  solution to the Yamabe problem is well-known and follows
from the combined works of Yamabe \cite{yam},  Trudinger \cite{tru}, Aubin \cite{aub}  and Schoen \cite{sch}.
\\

One of the generalizations of this problem on manifolds $(M,g)$ with boundary was proposed by Escobar in \cite{E92} and it  consists of finding in the conformal class of $g$, a scalar-flat metric of constant boundary mean curvature. Also in this case the geometric problem has a PDE formulation, i.e.
the metric $\tilde g=u^{4\over n-2}g$ has the required properties if the  function $u$ is a smooth positive solution to the critical boundary value problem
\begin{equation}
\left\{\begin{aligned}
&L_{g}u=0\ \hbox{in }M\\
&{\partial_\nu}u+\frac{n-2}{2}H_{g}u=\kappa u^{\frac{2(n-1)}{n-2}-1} \ \hbox{on}\ \partial M.
\end{aligned}\right.\label{eq:probK}
\end{equation}
for some constant $\kappa$. 
Here   $\nu$ is the outward unit normal
vector to $\partial M$ and
$H_{g}$ is the mean curvature on $\partial M$ with respect to $g.$

Solutions to \eqref{eq:probK} are critical points of the functional
$$Q(u):={\int\limits_M  \left(|\nabla u|^2+{n-2\over4(n-1)}R_gu^2\right)dv_g+\int\limits_{\partial\Omega}{n-2\over2}H_g u^2d\sigma_g\over\left(\int\limits_{\partial\Omega}|u|^{2(n-1)\over n-2}d\sigma\right)^{n-2\over n-1}}, \ u\in H$$
were $dv_g$ and $d\sigma_g$ denote the volume forms on $M$ and $\partial M,$ respectively, and the space
$$H:=\left\{u\in H^1_g(M)\ :\ u\not=0\ \hbox{on}\ \partial\Omega\right\}.$$

Escobar in \cite{E92} introduced the Sobolev quotient
\begin{equation}\label{sob-quo}
Q(M,\partial M):=\inf\limits_H Q(u),
\end{equation}
which  is  conformally invariant and  always satisfies
\begin{equation}\label{ineq}
Q(M,\partial M)\le Q(\mathbb B^n,\partial \mathbb  B^n),
\end{equation}
where $\mathbb B^n$ is the unit ball in $\mathbb R^n$ endowed with the euclidean metric $\mathfrak g_0$.

Following Aubin's approach (see \cite{aub}), Escobar proved that if $Q(M,\partial M)$ is finite and the strict inequality in \eqref{ineq} holds, i.e.
\begin{equation}\label{strict}
Q(M,\partial M)< Q(\mathbb B^n,\partial \mathbb  B^n),
\end{equation}
then the infimum \eqref{sob-quo} is achieved and a solution to problem \eqref{eq:probK} does exist.

In the negative case, i.e. $Q(M,\partial M)\le0$, it is clear that \eqref{strict} holds.
The positive case, i.e. $Q(M,\partial M)>0$, is the most difficult one and the proof of the validity of \eqref{strict} has required a lot of works.
Assume $(M,g)$ is not conformally equivalent to $(\mathbb B^n,\mathfrak g_0)$, \eqref{strict} has been proved by Escobar in \cite{E92} if
\begin{itemize}
\item[$\diamond$] $n=3,$
\item[$\diamond$]  $n=4,5$ and $\partial M$ is umbilic,
\item[$\diamond$]  $n\ge6$, $\partial M$ is umbilic and $M$ is locally conformally flat 
\item[$\diamond$]  $n\ge6$ and $M$ has a non-umbilic point
\end{itemize}
by Marques in \cite{Ma1,Ma2} if
\begin{itemize}
\item[$\diamond$]  $n=4,5$ and $\partial M$ is not umbilic,
\item[$\diamond$]  $n\ge8$, $\overline{\textrm Weyl}_g(\xi)\not=0$ for some $\xi\in\partial M$
\item[$\diamond$]  $n\ge9$, ${\textrm Weyl}_g(\xi)\not=0$ for some $\xi\in\partial M$
\end{itemize}
by  Almaraz in \cite{A1} if
\begin{itemize}
\item[$\diamond$]  $n=6,7,8$, $\partial M$ is umbilic and $ {\textrm Weyl}_g(\xi)\not=0$ for some $\xi\in\partial M.$
\end{itemize}
 We   remind that
a point $\xi\in\partial M$ is said to be {\it umbilic} if the tensor
 $T_{ij}=h_{ij}-H_g g_{ij}$ vanishes at $\xi,$ where $h_{ij}$ are the coefficients of the second fundamental form and $H={1\over n}g^{ij}h_{ij}$ is the mean curvature.
 The boundary $\partial M$ is said to be umbilic if all its points are umbilic.
Moreover, $\overline{\textrm Weyl}_g(\xi)$ denotes the Weyl tensor  of the restriction of the metric to the boundary.

The strategy to prove that the strict inequality \eqref{strict} holds consists in finding good test functions, which  involve the minimizer of the Sobolev quotient in $\mathbb R^n_+:=\left\{(x,t)\ :\ x\in\mathbb R^{n-1},\ t>0\right\}, $ namely the so-called {\it bubble}
\begin{equation}\label{bubble}
U_{\delta,y}(x,t):=\delta^{-{n-2\over2}}U\left({x-y\over\delta},{t\over\delta}\right),\ \delta>0,\ x,y\in\mathbb R^{n-1},\ t>0\end{equation}
where
\begin{equation}\label{limite}  U(x,t):={1\over\left((1+t)^2+|x|^2\right)^{n-2\over2}}.
\end{equation}
Indeed Beckner in \cite{B} and Escobar \cite{E88} proved that
$$Q(\mathbb B^n,\partial \mathbb  B^n)=\inf\left\{{\int\limits_{\mathbb R^n_+}   |\nabla u|^2dx \over\left(\int\limits_{\partial\mathbb R^n_+}|u|^{2(n-1)\over n-2}dx\right)^{n-2\over n-1}}\ :\ u\in H^1(\mathbb R^n_+),\ u\not=0\ \hbox{on}\ \partial \mathbb R^n_+\right\}.$$ The infimum is achieved by the functions
$U_{\delta,y}$ which are the only positive solutions to the limit  problem
\begin{equation}\label{limite1}
\left\{\begin{aligned}&\Delta u=0\ \hbox{in}\ \mathbb R^n_+\\
&\partial_\nu u=(n-2) u^{n\over n-2}.\\
\end{aligned}\right.
\end{equation} 
\\

Once the existence of solutions of problems \eqref{yamabe} or (\ref{eq:probK}) is settled, a natural question  concerns the structure of 
the  full set of positive solutions of \eqref{yamabe} or (\ref{eq:probK}).
Concerning the Yamabe problem on manifold without boundary,   Schoen (see \cite{s3}) 
raised the question of compactness
of the   set of solutions of problem \eqref{yamabe}. The question has been 
recently resolved by S. Brendle, M. A. Khuri, F. C. Marques and R. Schoen in a series of works \cite{bre,bre-mar,khu-mar-sch} (see also the survey by Marques \cite{mar}).
By their results, the set of solutions for the Yamabe problem \eqref{yamabe} is compact on any compact manifold of dimension $n\le24$, while it is not compact on some compact manifold of dimension $n\ge25.$

Therefore, it is natural to address the question of compactness of the set of positive solutions of (\ref{eq:probK}). 
If $Q(M,\partial M)<0$ the solution is unique and if $Q(M,\partial M)=0$ the solution is unique up to a constant factor.
If $Q(M,\partial M)>0$ the situation turns out to be more delicate. Indeed in  the case of the euclidean ball $(\mathbb B^n,\mathfrak g_0)$ the set of solutions is not compact! 
 Felli and Ould-Ahmedou \cite{FO03} proved that  compactness holds 
when $n\ge3$, $(M,g)$ is locally conformally flat and $\partial M$ is umbilic.
Almaraz in \cite{A3} proved that compactness also holds  if $n\ge7$ and the trace-free second fundamental form of $\partial M$ is non
zero everywhere. This last assumption is generic as a  transversality argument shows. 
Up to our knowledge, the only non-compactness result is due to  Almaraz.  In \cite{A2}  he   constructs a sequence of blowing-up conformal metrics with zero scalar curvature and constant boundary mean curvature on a ball of dimension $n\ge 25.$
It is unknown if the dimension $25$ is sharp for the compactness, namely if $n\le24$ the problem (\ref{eq:probK}) is compact or not.\\

In this paper we are interested in the existence of blowing-up solutions to problems which are linear perturbation of the geometric problem \eqref{eq:probK}.
More precisely, the question we address is the following.
{\it Does the problem
\begin{equation}\label{eq:Peps}
\left\{\begin{aligned}
&L_{g}u=0\ \hbox{in }M\\
&{\partial_\nu}u+\frac{n-2}{2}H_{g}u+\varepsilon \gamma u=  u^{ \frac{2(n-1)}{n-2}-1} \ \hbox{on}\ \partial M.
\end{aligned}\right.
\end{equation}
where $\gamma\in C^2(M)$, have  positive blowing-up solutions   as the positive parameter  $\varepsilon$ approaches zero?}

We give a positive answer under suitable geometric assumptions on $M$ and on the sign of the linear perturbation term $\gamma.$ Our main result reads as follows.
\begin{thm}
\label{thm:main} Assume $n\ge7$, $Q(M,\partial M)>0$ and
the trace-free second fundamental form of $\partial M$ is non zero everywhere.
If the function $\gamma\in C^{1}(M)$ is strictly positive, then for
$\varepsilon>0$ small there exists a positive solution $u_{\varepsilon}$
of (\ref{eq:Peps}) such that $\|u_{\varepsilon}\|_{H^{1}}$ is bounded
and $u_{\varepsilon}$ blows-up at a suitable point $q_{0}\in\partial M$
as $\varepsilon\rightarrow0.$ 
\end{thm}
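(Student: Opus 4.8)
The plan is to use the Lyapunov–Schmidt finite-dimensional reduction, building the solution as a perturbation of a single bubble glued to the manifold near a boundary point. First I would set up the functional-analytic framework: introduce Fermi coordinates around a point $\xi\in\partial M$, use them to transplant the Euclidean half-space bubble $U_{\delta,y}$ from \eqref{bubble} onto $M$ via a suitable cut-off, and rephrase \eqref{eq:Peps} as a problem for critical points of a perturbed energy functional $J_\varepsilon$ on $H$. Because the nonlinearity is critical for the trace Sobolev embedding, the linearized operator at the bubble has an $n$-dimensional kernel generated by the derivatives $\partial_\delta U_{\delta,y}$ and $\partial_{y_i}U_{\delta,y}$ ($i=1,\dots,n-1$); I would record this non-degeneracy (due to the classification of solutions to \eqref{limite1}) and then invert the linearized operator on the orthogonal complement of the approximate kernel, obtaining for each small $\varepsilon$, each center $\xi$ and each concentration parameter $\delta$ a unique remainder term $\phi=\phi_{\varepsilon,\xi,\delta}$, small in $H^1$, solving the auxiliary equation. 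The standard contraction-mapping estimate here requires controlling the error $\|J_\varepsilon'(W_{\delta,\xi})\|$ of the approximate solution, which is where the dimension hypothesis $n\ge7$ and the geometry enter.

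The heart of the argument is the expansion of the reduced functional $\widetilde J_\varepsilon(\xi,\delta):=J_\varepsilon(W_{\delta,\xi}+\phi_{\varepsilon,\xi,\delta})$ as $\varepsilon\to0$. I expect an expansion of the form
\begin{equation*}
\widetilde J_\varepsilon(\xi,\delta)=c_0+c_1\,\varepsilon\,\gamma(\xi)\,\delta+c_2\,|\mathring{\pi}(\xi)|^2\,\delta^2+o(\varepsilon\delta)+o(\delta^2),
\end{equation*}
where $\mathring\pi$ is the trace-free second fundamental form, $c_0=c_0(n)$ is the value of the quotient on the ball, and $c_1,c_2$ are explicit positive dimensional constants. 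The $\delta^2|\mathring\pi|^2$ term comes from the geometric expansion of the metric in Fermi coordinates (the leading correction when $\partial M$ is non-umbilic, available precisely for $n\ge7$ so that this term dominates the curvature/Weyl contributions), and the $\varepsilon\gamma\delta$ term is the contribution of the linear perturbation $\varepsilon\gamma u^2$ on $\partial M$. With $\gamma>0$ and $|\mathring\pi(\xi)|^2>0$ everywhere, the function $t\mapsto c_1\gamma(\xi)t+c_2|\mathring\pi(\xi)|^2 t^2$ (after the scaling $\delta=t\varepsilon$) has, for each fixed $\xi$, no interior critical point in $t>0$; so instead I would optimize in $\xi$ and use a topological/min–max argument: since $\xi\mapsto|\mathring\pi(\xi)|^2$ is a continuous positive function on the compact boundary, it attains a minimum at some $q_0$, and a stable critical point of the reduced functional can be produced there (alternatively, one localizes near a maximum point of $\gamma^2/|\mathring\pi|^2$ or the appropriate combination). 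Standard reduction theory then guarantees that such a stable critical point of $\widetilde J_\varepsilon$ yields a genuine critical point of $J_\varepsilon$, hence a solution $u_\varepsilon=W_{\delta_\varepsilon,q_\varepsilon}+\phi_\varepsilon$ with $\delta_\varepsilon\to0$, which is positive (by the maximum principle, since the negative part is of higher order) and blows up at $q_0$.

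The main obstacle I anticipate is the energy expansion: obtaining the precise coefficients $c_1,c_2$ and, crucially, showing that the error terms are genuinely of lower order uniformly in $(\xi,\delta)$ in the relevant range $\delta\sim\varepsilon$. This requires a careful expansion of the conformal Laplacian and the boundary operator in Fermi coordinates up to the order at which $|\mathring\pi|^2$ appears, together with delicate integral estimates for the bubble $U$ over the half-space — many of these integrals are borderline convergent and the convergence/divergence thresholds are exactly what forces $n\ge7$. A secondary technical point is that, because $\partial_\delta W$ is only an approximate kernel element, the $C^1$-closeness of $\widetilde J_\varepsilon$ to its leading expansion (needed so that critical points survive) must be checked, which in turn demands $C^1$-control of $\phi_{\varepsilon,\xi,\delta}$ in the parameters; this is routine but lengthy. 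Once these estimates are in place, the compactness/non-compactness dichotomy is consistent with Almaraz's compactness result \cite{A3}: the linear perturbation $\varepsilon\gamma$ is exactly what destroys compactness, by tilting the otherwise sign-definite reduced energy.
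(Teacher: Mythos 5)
Your overall framework (Ljapunov--Schmidt reduction around a bubble glued at a boundary point via Fermi coordinates, inversion of the linearized operator on the orthogonal of the approximate kernel, expansion of the reduced energy, persistence of stable critical points) is indeed the strategy of the paper, but there is a genuine gap at the heart of your argument: the sign of the $\delta^{2}$-term in the reduced expansion, and the way you propose to extract a critical point from it. You claim an expansion $c_{0}+c_{1}\varepsilon\delta\,\gamma(\xi)+c_{2}\delta^{2}\|\mathring\pi(\xi)\|^{2}+\dots$ with \emph{both} $c_{1},c_{2}>0$, observe correctly that then $\delta\mapsto c_{1}\gamma\,\delta+c_{2}\|\mathring\pi\|^{2}\delta^{2}$ has no critical point for $\delta>0$, and try to repair this by optimizing in $\xi$ or by a min--max localized near an extremum of $\gamma^{2}/\|\mathring\pi\|^{2}$. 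This cannot work: if the reduced energy is strictly monotone in $\delta$ throughout the admissible range $\delta\sim\varepsilon$ (which is the only range where the reduction estimates, and in particular the invertibility of the linearized operator, are available), then it has no critical point in $(\xi,\delta)$ at all, and no topological argument in $\xi$ alone produces one. In fact the sign you assert is the wrong one. The second-fundamental-form contribution at a non-umbilic point \emph{lowers} the energy (this is exactly the mechanism behind the strict inequality $Q(M,\partial M)<Q(\mathbb B^{n},\partial\mathbb B^{n})$ for $n\ge6$), and in the paper the coefficient of $\delta^{2}$ is the quantity $\varphi(q)$ of \eqref{phiq}, which is $\le0$ by \eqref{new} and strictly negative when $\pi(q)\neq0$ and $n\ge7$. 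With $\varphi(q)<0$ and $B\gamma(q)>0$ the reduced function $G(\lambda,q)=\lambda B\gamma(q)+\lambda^{2}\varphi(q)$ is a concave parabola in $\lambda$ with an interior maximum at $\lambda=-B\gamma(q)/(2\varphi(q))>0$, and a global maximum point $(\lambda_{0},q_{0})$ over $(0,\infty)\times\partial M$ gives the $C^{0}$-stable critical point; no min--max is needed, and the positivity of $\gamma$ is what places the critical $\lambda$ in a compact subset of $(0,+\infty)$.

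A second, related omission is that the bubble alone is not a good enough approximate solution to see this $\delta^{2}$-term correctly: with the ansatz $W_{\delta,q}+\Phi$ the interior error $\Delta_{g}W_{\delta,q}$ carries a term of size $\delta$ proportional to $h_{ij}(q)\,t\,\partial^{2}_{ij}U$, so $\|R\|_{H}=O(\delta)$ and the remainder $\Phi$ is only $O(\varepsilon)$, which pollutes the expansion precisely at the order $\varepsilon^{2}$ where the competition between $\gamma$ and the geometry takes place. The paper removes this obstruction by adding the correction $\delta V_{\delta,q}$, where $v_{q}$ solves the linear problem \eqref{eq:vqdef}; this cancels the $O(\delta)$ interior error (compare \eqref{eq:R1} and \eqref{eq:R2}), yields $\|R\|_{H}=O(\varepsilon^{2})$ and $\|\Phi\|_{H}=O(\varepsilon^{2})$, and produces the $\delta^{2}$-coefficient $\varphi(q)$, which contains the non-explicit but sign-definite term $\tfrac12\int_{\mathbb R^{n}_{+}}\Delta v_{q}\,v_{q}$ in addition to the explicit $-\,\frac{(n-6)(n-2)\omega_{n-1}I^{n}_{n-1}}{4(n-1)^{2}(n-4)}\|\pi(q)\|^{2}$ (note also that this coefficient is not known to be an explicit multiple of $\|\mathring\pi(q)\|^{2}$, as you implicitly assume; the paper only conjectures that). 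So to make your proposal work you must (i) enrich the ansatz with the correction term solving \eqref{eq:vqdef}, (ii) derive the expansion with the correct (nonpositive) $\delta^{2}$-coefficient, and (iii) replace the min--max in $\xi$ by the elementary maximization of $\lambda B\gamma(q)+\lambda^{2}\varphi(q)$ in both variables.
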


\begin{rem}

The proof of our result relies on a Ljapunov-Schmidt procedure. We build solutions to \eqref{eq:Peps} which at the main  order   looks like the bubble \eqref{bubble} centered at a point $q_0$ on the boundary. 
As usual the blowing-up point $q_0$  turns out to be a critical point of the reduced energy whose leading term is a function (see \eqref{ridotta}) defined on the boundary,
which cannot be explicitly written in terms of the geometry quantities of the boundary.  The  difficulty comes from the fact that we cannot  find an explicit expression of the correction term we need to add
to the bubble to have a good approximation. The correction term  solves the linear problem \eqref{eq:vqdef}  and it gives a significant contribution to the reduced energy (see 	\eqref{phiq}).
Actually, we conjecture that the term \eqref{phiq} (up to a constant factor) is nothing but the trace-free second fundamental form at $q_0$ and  so the blowing-up point $q_0$ is  a critical point of the function
$$q\to{\|\hbox{the trace-free second fundamental form at $q$}\|\over\gamma^2(q)},\ q\in\partial M.$$

\end{rem}

\begin{rem}
Theorem \ref{thm:main} states that problem  (\ref{eq:Peps})  is not compact if the linear perturbation term is strictly positive in $\partial M.$ We strongly believe that the compactness is recovered if the linear perturbation is negative somewhere in $\partial\Omega$. This is what happens in the case of   linear perturbation of the  Yamabe problem \eqref{yamabe}.
Indeed, if we consider the perturbed problem
\begin{equation}
L_{g}u+\varepsilon f u=\kappa u^{n+2\over n-2}\ \hbox{in }M, \label{per-yamabe}
\end{equation}
where $\varepsilon$ is a positive parameter and $f\in C^2(M)$.
Druet in \cite{dru} shows that if $f\le0$ in $M,$ blow-up does not occur if $3\le n\le5.$
 When $f$ is positive somewhere in $M$, blow-up is possible as showed by  Druet and Hebey in \cite{dru-heb} in the case of the sphere
 and by Esposito, Pistoia, and V\'etois  in \cite{EPV14} on general compact manifolds.
 \end{rem}

\begin{rem}
 Almaraz in \cite{A3}   studied the compactness of problem \eqref{eq:probK} when the exponent in the non-linearity of the boundary is below the critical exponent and he proved the following result.
\begin{thm}\label{almaraz} Assume $n\ge7$, $Q(M,\partial M)>0$ and
the trace-free second fundamental form of $\partial M$ is non zero everywhere.
Then the problem
\begin{equation}\label{peps}
\left\{\begin{aligned}
&L_{g}u=0\ \hbox{in }M\\
&{\partial_\nu}u+\frac{n-2}{2}H_{g}u=  u^{ \frac{2(n-1)}{n-2}-1-\varepsilon} \ \hbox{on}\ \partial M.
\end{aligned}\right.
\end{equation}
is compact, namely there exist $\varepsilon_0>0$ and   a positive constant $C$ such that   for any $\varepsilon\in(0,\varepsilon_0)$ any positive solution $u_\varepsilon$ of \eqref{peps} satisfies
$\|u_\varepsilon\|_{C^{2,\alpha}(M)}\le C$ for some $\alpha\in(0,1).$\end{thm}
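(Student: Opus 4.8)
\noindent\textbf{Proof strategy for Theorem~\ref{almaraz}.}
I would argue by contradiction through a blow-up analysis, adapting to manifolds with boundary the isolated / isolated--simple blow-up scheme of Schoen and Li--Zhu (see \cite{khu-mar-sch} for the analogous closed-manifold statement), in the form developed by Felli--Ould Ahmedou \cite{FO03} and by Almaraz \cite{A3} for the boundary Yamabe problem \eqref{eq:probK}. Suppose the conclusion fails: there are $\varepsilon_k\to0$ and positive solutions $u_k$ of \eqref{peps} with $\varepsilon=\varepsilon_k$ such that $\|u_k\|_{C^{2,\alpha}(M)}\to+\infty$; one may assume $\varepsilon_k\to0$, since for $\varepsilon$ bounded away from $0$ the exponent in \eqref{peps} is uniformly subcritical and a priori bounds are classical. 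Set $p_k:=\tfrac{2(n-1)}{n-2}-1-\varepsilon_k=\tfrac{n}{n-2}-\varepsilon_k$. Testing the equation with $u_k$ and using $Q(M,\partial M)>0$ gives $\int_{\partial M}u_k^{p_k+1}\,d\sigma_g\ge c>0$, so $u_k$ does not collapse, and by interior and boundary elliptic estimates for \eqref{peps} the divergence of the $C^{2,\alpha}$-norm forces $L^\infty$-blow-up, $\mathcal M_k:=\max_M u_k\to+\infty$. A consequence recorded along the way, using the bubble description below, is that $\mathcal M_k^{\varepsilon_k}$ stays bounded; this is what keeps all later estimates uniform in $\varepsilon_k$.

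\noindent Next I would localise in Fermi coordinates along $\partial M$ and rescale around concentration points. If $x_k$ realises a weighted local maximum of $u_k$ with $u_k(x_k)\to\infty$, the rescalings $v_k(z):=u_k(x_k)^{-1}u_k\!\big(x_k+u_k(x_k)^{1-p_k}z\big)$ solve, on larger and larger half-balls, a problem converging to the limit problem \eqref{limite1}; by the Liouville-type classification recalled above (Beckner \cite{B}, Escobar \cite{E88}) the only positive limit is the standard bubble \eqref{limite}, which in particular forces concentration to occur on $\partial M$. A Schoen-type selection argument then produces, along a subsequence, a finite blow-up set $\{q_1,\dots,q_N\}\subset\partial M$ together with a uniform energy bound, and a Harnack inequality shows these points are mutually separated and that each $q_j$ is an \emph{isolated} blow-up point.

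\noindent The technical core is to promote ``isolated'' to ``isolated simple'' and then to produce the sharp local description near each $q_j$. For this one applies a Pohozaev-type identity on small half-balls $B_\rho^+(q_j)$ to show that the weighted spherical average $r\mapsto r^{(n-2)/2}\,\overline u_k(r)$, with $\overline u_k(r)$ the average of $u_k$ over the spherical part of $\partial B_r^+(q_j)$, has exactly one critical point in a fixed range, ruling out secondary bubbling. From isolated--simpleness one derives the two-sided bound $u_k(y)\sim\mathcal M_k^{-1}|y-x_k|^{2-n}$ for $y$ near $q_j$, and then the refined expansion of $u_k$ as a rescaled bubble plus a correction $\varphi$ solving a linearised boundary problem sourced by the local geometry of $\partial M$ at $q_j$. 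The restriction $n\ge7$ is used precisely here: in this expansion the contribution of $\varphi$ to the Pohozaev quantity is, up to a nonzero universal constant, a multiple of $|T(q_j)|^2$, the squared norm of the trace-free second fundamental form $T_{ij}=h_{ij}-H_g g_{ij}$ at $q_j$, and for $n\ge7$ this term sits at the order where it dominates the competing boundary-mass and Weyl-tensor contributions.

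\noindent Finally I would insert the expansion into the Pohozaev identity on $B_\rho^+(q_j)$, let $k\to\infty$ and then $\rho\to0$. After the natural normalisation the identity reads, schematically, $-\kappa_n|T(q_j)|^2+o(1)=\big(\tfrac{n-1}{p_k+1}-\tfrac{n-2}{2}\big)\!\int u_k^{p_k+1}\,d\sigma_g+(\text{boundary term on }\partial B_\rho^+(q_j))$, with $\kappa_n>0$; on the right the subcritical defect is \emph{nonnegative} (since $p_k+1<\tfrac{2(n-1)}{n-2}$), while the boundary term vanishes as $k\to\infty$ for each fixed $\rho$, so the right-hand side has nonnegative $\liminf$. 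Equating the two sides then forces $-\kappa_n|T(q_j)|^2\ge0$, i.e. $T(q_j)=0$, contradicting the hypothesis that the trace-free second fundamental form is non-zero everywhere on $\partial M$. Hence no blow-up point can exist, $\mathcal M_k$ stays bounded, and a uniform $L^\infty$ bound holds; $L^p$ and Schauder theory for \eqref{peps} upgrade this to the asserted uniform $C^{2,\alpha}$ bound, for some $\alpha\in(0,1)$. I expect the main obstacle to be exactly the isolated--simple step together with the sharp two-term expansion of $u_k$ near $q_j$: carrying these estimates through uniformly in $\varepsilon_k$ while isolating the $|T(q_j)|^2$-term with the correct sign is where the hypotheses $n\ge7$ and $T\not\equiv0$ on $\partial M$ are fully consumed.
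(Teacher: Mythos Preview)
The paper does not prove Theorem~\ref{almaraz}: it is stated inside a remark and attributed verbatim to Almaraz~\cite{A3}; the paper only \emph{quotes} it in order to contrast it with the new non-compactness result (Theorem~\ref{thm:main}) and with the supercritical case. There is therefore no ``paper's own proof'' to compare against.

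That said, your outline is the correct one and matches the strategy of~\cite{A3}: contradiction via $L^\infty$ blow-up, rescaling in Fermi coordinates to the half-space model problem~\eqref{limite1}, classification of the limit via~\cite{B,E88}, the isolated/isolated--simple dichotomy, and a local Pohozaev identity whose leading geometric term is a positive multiple of $\|T(q_j)\|^2$, which is where $n\ge7$ and the nowhere-vanishing of the trace-free second fundamental form are used. The sign observation you make---that the subcritical defect contributes with a nonnegative sign to the Pohozaev balance---is exactly what allows the argument to run uniformly for the family~\eqref{peps} with $\varepsilon\in(0,\varepsilon_0)$. So your proposal is a faithful sketch of Almaraz's proof; it is simply not something the present paper undertakes.
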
 In other words, problem \eqref{peps}
 does not have any blowing-up solutions as the positive parameter $\varepsilon$ approaches zero. 
Let us point out that combining our argument with some ideas developed in a previous paper \cite{GMP16} we can also obtain the existence of blowing-up solutions for problem \eqref{peps} when the parameter 
$\varepsilon$ is {\it negative} and small. Then the  compactness result Theorem \ref{almaraz} is  sharp, namely the problem \eqref{peps} is compact 
 if the exponent in the non-linearity of the boundary approaches the critical exponent  from below and it is non-compact   if the exponent approaches the critical exponent from above.
\end{rem}

The paper is organized as follows. In Section \ref{uno} we set the problem in a suitable scheme, in Section \ref{due} we perform the finite-dimensional reduction, in Section \ref{tre} we study the reduced problem and in Section \ref{quattro} we prove Theorem \ref{thm:main}. The Appendix contains some technical results.

\section{Variational framework and preliminaries}\label{uno}

It is well known \cite{E92} that there exists a global conformal transformation
which maps the manifold $M$ in a manifold for which the mean curvature
of the boundary is identically zero, so we can choose a metric $(M,g)$ such
that $H_{g}\equiv0$. This can be done, by a global conformal transformation
$g=\varphi_{1}^{4/n}\bar{g}$, where $\varphi_{1}$ is the positive
eigenvector of the first eigenvalue $\lambda_{1}$ of the problem
\[
\left\{ \begin{array}{ccc}
-L_{g}\varphi+\lambda_{1}\varphi=0 &  & \text{on }M;\\
B_{g}\varphi=0 &  & \text{on \ensuremath{\partial}}M.
\end{array}\right.
\]
It is useful to point out that if $\pi$ denotes the second fundamental form related to $g$ and $q\in\partial M$
then $\pi(q)$ is non-zero if and only if the trace-free second fundamental form related to $\bar g$ at the point $q$ is non-zero.

By the assumption $Q(M,\partial M)>0$ we have $K>0$ in (\ref{eq:probK}),
so we can normalize it to be $(n-2)$. Moreover, to gain in readability,
we set $a=\frac{n-2}{4(n-1)}R_{g}$ , so Problem (\ref{eq:Peps})
reads as

\begin{equation}
\left\{ \begin{array}{ccc}
-\Delta_{g}u+au=0 &  & \text{on }M;\\
\frac{\partial u}{\partial\nu}+\varepsilon\gamma u=(n-2)\left(u^{+}\right)^{\frac{n}{n-2}} &  & \text{on \ensuremath{\partial}}M.
\end{array}\right.\label{eq:P}
\end{equation}
Since $Q(M,\partial M)>0$, we can endow $H^{1}(M)$ with the following
equivalent scalar product
\[
\left\langle \left\langle u,v\right\rangle \right\rangle _{H}=\int_{M}(\nabla_{g}u\nabla_{g}v+auv)d\mu_{g}
\]
which leads to the equivalent norm $\|\cdot\|_{H}$. We have the well
know maps
\begin{align*}
i: & H^{1}(M)\rightarrow L^{t}(\partial M)\\
i^{*}: & L^{t'}(\partial M)\rightarrow H^{1}(M)
\end{align*}
for $1\le t\le\frac{2(n-1)}{n-2}$ (and for $1\le t<\frac{2(n-1)}{n-2}$
the embedding $i$ is compact). 

Given $f\in L^{\frac{2(n-1)}{n-2}}(\partial M)$ there exists a unique
$u\in H^{1}(M)$ such that
\begin{align}
u=i^{*}(f) & \iff\left\langle \left\langle u,\varphi\right\rangle \right\rangle _{H}=\int_{\partial M}f\varphi d\sigma\text{ for all }\varphi\nonumber \\
 & \iff\left\{ \begin{array}{ccc}
-\Delta_{g}u+au=0 &  & \text{on }M;\\
\frac{\partial u}{\partial\nu}=f &  & \text{on \ensuremath{\partial}}M.
\end{array}\right.\label{eq:istella}
\end{align}
The functional defined on $H^{1}(M)$ associated to (\ref{eq:P})
is
\[
J_{\varepsilon}(u):=\frac{1}{2}\int_{M}|\nabla_{g}u|^{2}+au^{2}d\mu_{g}+\frac{1}{2}\int_{\partial M}\varepsilon\gamma u^{2}d\sigma-\frac{(n-2)^{2}}{2(n-1)}\int_{\partial M}\left(u^{+}\right)^{\frac{2(n-1)}{n-2}}d\sigma.
\]
 To solve problem (\ref{eq:P}) is equivalent to find $u\in H^{1}(M)$
such that
\begin{equation}
u=i^{*}(f(u)-\varepsilon\gamma u)\label{eq:P*}
\end{equation}
 where $f(u)=(n-2)\left(u^{+}\right)^{\frac{n}{n-2}}$. We remark
that, if $u\in H^{1}$, then $f(u)\in L^{\frac{2(n-1)}{n}}(\partial M)$. 

Given $q\in\partial M$ and $\psi_{q}^{\partial}:\mathbb{R}_{+}^{n}\rightarrow M$
the Fermi coordinates in a neighborhood of $q$; we define
\begin{align*}
W_{\delta,q}(\xi) & =U_{\delta}\left(\left(\psi_{q}^{\partial}\right)^{-1}(\xi)\right)\chi\left(\left(\psi_{q}^{\partial}\right)^{-1}(\xi)\right)=\\
 & =\frac{1}{\delta^{\frac{n-2}{2}}}U\left(\frac{y}{\delta}\right)\chi(y)=\frac{1}{\delta^{\frac{n-2}{2}}}U\left(x\right)\chi(\delta x)
\end{align*}
where $y=(z,t)$, with $z\in\mathbb{R}^{n-1}$ and $t\ge0$, $\delta x=y=\left(\psi_{q}^{\partial}\right)^{-1}(\xi)$
and $\chi$ is a radial cut off function, with support in ball of
radius $R$. 

Here $U_{\delta}(y)=\frac{1}{\delta^{\frac{n-2}{2}}}U\left(\frac{y}{\delta}\right)$
is the one parameter family of solution of the problem
\begin{equation}
\left\{ \begin{array}{ccc}
-\Delta U_{\delta}=0 &  & \text{on }\mathbb{R}_{+}^{n};\\
\frac{\partial U_{\delta}}{\partial t}=-(n-2)U_{\delta}^{\frac{n}{n-2}} &  & \text{on \ensuremath{\partial}}\mathbb{R}_{+}^{n}.
\end{array}\right.\label{eq:Udelta}
\end{equation}
and ${\displaystyle U(z,t):=\frac{1}{\left[(1+t)^{2}+|z|^{2}\right]^{\frac{n-2}{2}}}}$
is the standard bubble in $\mathbb{R}_{+}^{n}$.

Moreover, we consider the functions 
\begin{eqnarray*}
j_{i}=\frac{\partial U}{\partial x_{i}},\ i=1,\dots n-1 &  & j_{n}=\frac{n-2}{2}U+\sum_{i=1}^{n}y_{i}\frac{\partial U}{\partial y_{i}}
\end{eqnarray*}
which are solutions of the linearized problem 
\begin{equation}
\left\{ \begin{array}{ccc}
-\Delta\phi=0 &  & \text{on }\mathbb{R}_{+}^{n};\\
\frac{\partial\phi}{\partial t}+nU^{\frac{2}{n-2}}\phi=0 &  & \text{on \ensuremath{\partial}}\mathbb{R}_{+}^{n}.
\end{array}\right.\label{eq:linearizzato}
\end{equation}
Given $q\in\partial M$ we define, for $b=1,\dots,n$
\[
Z_{\delta,q}^{b}(\xi)=\frac{1}{\delta^{\frac{n-2}{2}}}j_{b}\left(\frac{1}{\delta}\left(\psi_{q}^{\partial}\right)^{-1}(\xi)\right)\chi\left(\left(\psi_{q}^{\partial}\right)^{-1}(\xi)\right)
\]
and we decompose $H^{1}(M)$ in the direct sum of the following two
subspaces
\begin{align*}
K_{\delta,q} & =\text{Span}\left\langle Z_{\delta,q}^{1},\dots,Z_{\delta,q}^{n}\right\rangle \\
K_{\delta,q}^{\bot} & =\left\{ \varphi\in H^{1}(M)\ :\ \left\langle \left\langle \varphi,Z_{\delta,q}^{b}\right\rangle \right\rangle _{H}=0,\ b=1,\dots,n\right\} 
\end{align*}
and we define the projections
\begin{eqnarray*}
\Pi=H^{1}(M)\rightarrow K_{\delta,q} &  & \Pi^{\bot}=H^{1}(M)\rightarrow K_{\delta,q}^{\bot}.
\end{eqnarray*}

Given $q\in\partial M$ we also define in a similar way
\[
V_{\delta,q}(\xi)=\frac{1}{\delta^{\frac{n-2}{2}}}v_{q}\left(\frac{1}{\delta}\left(\psi_{q}^{\partial}\right)^{-1}(\xi)\right)\chi\left(\left(\psi_{q}^{\partial}\right)^{-1}(\xi)\right),
\]
and 
\begin{equation}
\left(v_{q}\right)_{\delta}(y)=\frac{1}{\delta^{\frac{n-2}{2}}}v_{q}\left(\frac{y}{\delta}\right);\label{eq:vqdelta}
\end{equation}
here $v_{q}:\mathbb{R}_{+}^{n}\rightarrow\mathbb{R}$ is the unique
solution of the problem 
\begin{equation}
\left\{ \begin{array}{ccc}
-\Delta v=2h_{ij}(q)t\partial_{ij}^{2}U &  & \text{on }\mathbb{R}_{+}^{n};\\
\frac{\partial v}{\partial t}+nU^{\frac{2}{n-2}}v=0 &  & \text{on \ensuremath{\partial}}\mathbb{R}_{+}^{n}.
\end{array}\right.\label{eq:vqdef}
\end{equation}
such that $v_{q}$ is $L^{2}(\mathbb{R}_{+}^{n})$-ortogonal to $j_{b}$
for all $b=1,\dots,n$ Here $h_{ij}$ is the second fundamental form
and we use the Einstein convention of repeated indices. We remark
\begin{equation}
|\nabla^{r}v_{q}(y)|\le C(1+|y|)^{3-r-n}\text{ for }r=0,1,2,\label{eq:gradvq}
\end{equation}
\begin{equation}
\int_{\partial\mathbb{R}_{+}^{n}}U^{\frac{n}{n-2}}v_{q}=0\label{eq:Uvq}
\end{equation}
and
\begin{equation}\label{new}
\int_{\partial\mathbb{R}_{+}^{n}}\Delta v_{q}v_qdzdt\le 0,
\end{equation}
(see \cite[Proposition 5.1 and estimate (5.9)]{A3}).
\begin{prop}
The map $q\mapsto v_{q}$ is in $C^{2}(\partial M)$.\end{prop}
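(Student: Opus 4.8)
The plan is to exploit the fact that, in the linear problem \eqref{eq:vqdef}, the point $q$ enters \emph{only} through the numbers $h_{ij}(q)$ --- the components, in the Fermi chart centered at $q$, of the second fundamental form of $\partial M$ --- whereas the domain $\mathbb{R}_{+}^{n}$, the operators $-\Delta$ and $\partial_{t}+nU^{\frac{2}{n-2}}\cdot\,$, and the $L^{2}$-orthogonality constraints are all independent of $q$. Thus the regularity of $q\mapsto v_{q}$ should be reduced to that of $q\mapsto(h_{ij}(q))$, which is $C^{\infty}$ (a fortiori $C^{2}$) since $g$ is smooth and the Fermi charts $\psi_{q}^{\partial}$ depend smoothly on the base point.

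First I would fix the functional framework. Following \cite{A3}, let $X$ be the weighted space in which $v_{q}$ is known to live --- the one singled out by the decay \eqref{eq:gradvq}, i.e. functions $v$ with $|\nabla^{r}v(y)|\le C(1+|y|)^{3-r-n}$ for $r=0,1,2$ --- and let $Y$ be the companion space for the right-hand sides, so that $Lv:=\bigl(-\Delta v\text{ in }\mathbb{R}_{+}^{n},\ \partial_{t}v+nU^{\frac{2}{n-2}}v\text{ on }\partial\mathbb{R}_{+}^{n}\bigr)$ is bounded from $X$ to $Y$. By the linear analysis of \cite[Proposition~5.1]{A3}, $\ker L=\mathrm{span}\{j_{1},\dots,j_{n}\}$, and on $X_{0}:=\{v\in X:\ \int_{\mathbb{R}_{+}^{n}}v\,j_{b}=0,\ b=1,\dots,n\}$ the operator $L$ restricts to an isomorphism onto a closed subspace $Y_{0}\subset Y$. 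I would then check that, for every trace-free symmetric matrix $A=(A_{ij})_{i,j=1}^{n-1}$, the datum $R(A):=2A_{ij}t\,\partial_{ij}^{2}U$ belongs to $Y_{0}$: it lies in $Y$ because $\partial_{ij}^{2}U=O(|y|^{-n})$, and it is $L^{2}(\mathbb{R}_{+}^{n})$-orthogonal to each $j_{b}$ --- for $b\le n-1$ this is forced by the parity of $U$ in the tangential variables, while for $b=n$ it follows from $\int_{\mathbb{R}_{+}^{n}}t\,\partial_{ij}^{2}U\,j_{n}=c\,\delta_{ij}$ together with $\mathrm{tr}\,A=0$. Hence $w_{A}:=(L|_{X_{0}})^{-1}R(A)\in X_{0}$ is well defined, and $A\mapsto w_{A}$ is a \emph{bounded linear} map from the space of trace-free symmetric $(n-1)\times(n-1)$ matrices into $X$, in particular of class $C^{\infty}$.

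To conclude, recall that after the conformal normalization $H_{g}\equiv0$ the second fundamental form is trace-free along $\partial M$, so in the Fermi chart at $q$ (where $g_{ij}(q)=\delta_{ij}$) one has $\sum_{i}h_{ii}(q)=0$; therefore $q\mapsto h(q):=(h_{ij}(q))$ sends $\partial M$ into the space of trace-free symmetric matrices and is of class $C^{2}$. By the uniqueness part of \eqref{eq:vqdef} we have $v_{q}=w_{h(q)}$, so $q\mapsto v_{q}$ is the composition of the $C^{2}$ map $q\mapsto h(q)$ with the bounded linear map $A\mapsto w_{A}$, and consequently it belongs to $C^{2}(\partial M;X)$, which is the assertion. (Equivalently, one may apply the implicit function theorem to $F(q,v):=\bigl(-\Delta v-2h_{ij}(q)t\,\partial_{ij}^{2}U,\ \partial_{t}v+nU^{\frac{2}{n-2}}v\bigr)$ on $X_{0}$, noting that $\partial_{v}F=L|_{X_{0}}$ is invertible.) The only genuinely delicate ingredient is the choice of weighted spaces making $L|_{X_{0}}$ an isomorphism with the stated kernel; but this is precisely the linear theory already carried out in \cite{A3}, and once it is available the proposition reduces to the soft fact that a linear problem depending $C^{2}$-smoothly on a parameter has a $C^{2}$-smooth solution.
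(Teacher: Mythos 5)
Your argument is correct, and it reaches the conclusion by a different mechanism than the paper, although both rest on the same two external inputs: Almaraz's linear theory for the operator $v\mapsto\bigl(-\Delta v,\ \partial_{t}v+nU^{\frac{2}{n-2}}v\bigr)$ and the smooth dependence of the Fermi-chart components $h_{ij}(q)$ on $q$. The paper differentiates the defining problem \eqref{eq:vqdef} in the parameter: writing $q=\exp_{q_{0}}y$, it observes that the formal derivative $\Gamma_{i}$ must solve the same linear problem with datum $2\bigl(\partial_{y_{i}}h_{jk}(q(y))\big|_{y=0}\bigr)t\,\partial^{2}_{jk}U$ and invokes the existence result of \cite{A3}, repeating the argument for second derivatives. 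You instead factor $q\mapsto v_{q}$ as the composition of the smooth finite-dimensional map $q\mapsto h(q)$ (trace-free since $H_{g}\equiv0$) with a \emph{fixed} bounded linear solution operator $A\mapsto w_{A}=(L|_{X_{0}})^{-1}R(A)$ on trace-free matrices, after verifying the compatibility $R(A)\perp j_{b}$ (parity for $b\le n-1$; $\int t\,\partial^{2}_{ij}U\,j_{n}=c\,\delta_{ij}$ together with $\operatorname{tr}A=0$ for $b=n$) --- both verifications are correct. What your route buys: differentiability of $q\mapsto v_{q}$ comes for free from the chain rule (or the implicit function theorem), whereas the paper's sketch only exhibits a candidate derivative $\Gamma_{i}$ and leaves the convergence of difference quotients implicit. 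What it costs: you need $L|_{X_{0}}$ to be a bounded isomorphism between suitable weighted spaces, slightly more than the bare existence statement of \cite[Proposition 5.1]{A3}, though this is indeed contained in Almaraz's linear analysis, as you acknowledge. One informality shared with the paper: $h_{ij}(q)$, and hence $v_{q}$, depends on the choice of orthonormal frame defining the Fermi chart at $q$, so the statement is to be read locally after fixing a smooth frame; neither your argument nor the paper's addresses this, and it does not affect the use made of the proposition.
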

\begin{proof}
Let $q_{0}\in\partial M$. If $q\in\partial M$ is sufficiently close
to $q_{0}$, in Fermi coordinates we have $q=q(y)=\exp_{q_{0}}y$,
with $y\in\mathbb{R}^{n-1}$. So $v_{q}=v_{\exp_{q_{0}}y}$ and we
define
\[
\Gamma_{i}=\left.\frac{\partial}{\partial y_{i}}v_{\exp_{q_{0}}y}\right|_{y=0}.
\]
We prove the result for $\Gamma_{1}$, being the other cases completely
analogous. By (\ref{eq:vqdef}) we have that $\Gamma_{1}$ solves
\[
\left\{ \begin{array}{ccc}
-\Delta\Gamma_{1}=2\left(\left.\frac{\partial}{\partial y_{1}}\left(h_{ij}(q(y))\right)\right|_{y=0}\right)t\partial_{ij}^{2}U &  & \text{on }\mathbb{R}_{+}^{n};\\
\frac{\partial\Gamma_{1}}{\partial t}+nU^{\frac{2}{n-2}}\Gamma_{1}=0 &  & \text{on \ensuremath{\partial}}\mathbb{R}_{+}^{n}.
\end{array}\right.
\]
and, by the result of \cite{A3}, we know that $\Gamma_{1}$ exists.
We can proceed in analogous way for the second derivative. 
\end{proof}
We define the useful integral quantity 
\[
I_{m}^{\alpha}=\int_{0}^{\infty}\frac{\rho^{\alpha}}{(1+\rho^{2})^{m}}d\rho
\]
and in the appendix (Remark \ref{lem:I-a-m}) we recall some useful
estimates of these integrals.

Finally, we have to we recall the Taylor expansion for the metric
$g$ and for the volume form on $M$, expressed by the Fermi coordinates.

Since, without loss of generality, we have chosen a manifold for which
$H_{g}\equiv0$, we have the following expansions in a neighborhood
of $y=0$, with the usual notation $y=(z,t)$, where $z\in\mathbb{R}^{n}$
and $t\ge0$. Here and in the following, we use the Einstein convention
on the sum of repeated indices. Moreover, we use the convention that
$a,b,c,d=1,\dots,n$ and $i,j,k,l=1,\dots,n-1$.
\begin{align}
|g(y)|^{1/2}= & 1-\frac{1}{2}\left[\|\pi\|^{2}+\ric(0)\right]t^{2}-\frac{1}{6}\bar{R}_{ij}(0)z_{i}z_{j}+O(|y|^{3})\label{eq:|g|}\\
g^{ij}(y)= & \delta_{ij}+2h_{ij}(0)t+\frac{1}{3}\bar{R}_{ikjl}(0)z_{k}z_{l}+2\frac{\partial h_{ij}}{\partial z_{k}}(0)tz_{k}\nonumber \\
 & +\left[R_{injn}(0)+3h_{ik}(0)h_{kj}(0)\right]t^{2}+O(|y|^{3})\label{eq:gij}\\
g^{an}(y)= & \delta_{an}\label{eq:gin}
\end{align}
where $\pi$ is the second fundamental form and $h_{ij}(0)$ are its
coefficients, $\bar{R}_{ikjl}(0)$ and $R_{abcd}(0)$ are the curvature
tensor of $\partial M$ and $M$, respectively, $\bar{R}_{ij}(0)=\bar{R}_{ikjk}(0)$
are the coefficients of the Ricci tensor, and $\ric(0)=R_{nini}(0)=R_{nn}(0)$
(see \cite{E92}).

\section{Finite dimensional reduction}\label{due}

We look for a good approximation for the solution of problem (\ref{eq:P*}),
then we look for solution with the form
\[
u=W_{\delta,q}+\delta V_{\delta,q}+\Phi,\text{ with }\Phi\in K_{\delta,q}^{\bot}.
\]
and we project (\ref{eq:P*}) on $K_{\delta,q}^{\bot}$ and $K_{\delta,q}$
obtaining 
\begin{align}
\Pi^{\bot}\left\{ W_{\delta,q}+\delta V_{\delta,q}+\Phi-i^{*}\left(f(W_{\delta,q}+\delta V_{\delta,q}+\Phi)-\varepsilon\gamma(W_{\delta,q}+\delta V_{\delta,q}+\Phi)\right)\right\}  & =0;\label{eq:P-Kort}\\
\Pi\left\{ W_{\delta,q}+\delta V_{\delta,q}+\Phi-i^{*}\left(f(W_{\delta,q}+\delta V_{\delta,q}+\Phi)-\varepsilon\gamma(W_{\delta,q}+\delta V_{\delta,q}+\Phi)\right)\right\}  & =0.\label{eq:P-K}
\end{align}
To solve (\ref{eq:P-Kort}) we define the linear operator $L=L_{\delta,q}:K_{\delta,q}^{\bot}\rightarrow K_{\delta,q}^{\bot}$
as
\begin{equation}
L(\Phi)=\Pi^{\bot}\left\{ \Phi-i^{*}\left(f'(W_{\delta,q}+\delta V_{\delta,q})[\Phi]\right)\right\} \label{eq:defL}
\end{equation}
 and a nonlinear term $N(\Phi)$ and a remainder term $R$ as
\begin{align}
N(\Phi)= & \Pi^{\bot}\left\{ i^{*}\left(f(W_{\delta,q}+\delta V_{\delta,q}+\Phi)-f(W_{\delta,q}+\delta V_{\delta,q})-f'(W_{\delta,q}+\delta V_{\delta,q})[\Phi]\right)\right\} \label{eq:defN}\\
R= & \Pi^{\bot}\left\{ i^{*}\left(f(W_{\delta,q}+\delta V_{\delta,q})\right)-W_{\delta,q}-\delta V_{\delta,q}\right\} \label{eq:defR}
\end{align}
so eq (\ref{eq:P-Kort}) rewrites as 
\[
L(\Phi)=N(\Phi)+R-\Pi^{\bot}\left\{ i^{*}\left(\varepsilon\gamma(W_{\delta,q}+\delta V_{\delta,q}+\Phi)\right)\right\} .
\]

\begin{lem}
\label{prop:L}Let $\delta=\varepsilon\lambda$ For $a,b\in\mathbb{R}$,
$0<a<b$ there exists a positive constant $C=C(a,b)$ such that, for
$\varepsilon$ small, for any $q\in\partial M$, for any $\lambda\in[a,b]$
and for any $\phi\in K_{\delta,q}^{\bot}$ there holds
\[
\|L_{\delta,q}(\phi)\|_{H}\ge C\|\phi\|_{H}.
\]

\end{lem}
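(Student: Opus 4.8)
The plan is to prove the coercivity estimate by a standard contradiction/compactness argument adapted to the Fermi-coordinate setting. Suppose the conclusion fails: there exist sequences $\varepsilon_m\to 0$, $q_m\in\partial M$, $\lambda_m\in[a,b]$ and $\phi_m\in K_{\delta_m,q_m}^{\bot}$ with $\delta_m=\varepsilon_m\lambda_m$, normalized by $\|\phi_m\|_H=1$, such that $\|L_{\delta_m,q_m}(\phi_m)\|_H\to 0$. Writing $\psi_m:=L_{\delta_m,q_m}(\phi_m)=\Pi^{\bot}\{\phi_m-i^*(f'(W_{\delta_m,q_m}+\delta_m V_{\delta_m,q_m})[\phi_m])\}$, we have $\phi_m - i^*(f'(W_{\delta_m,q_m}+\delta_m V_{\delta_m,q_m})[\phi_m]) = \psi_m + \sum_{b=1}^n c_m^b Z_{\delta_m,q_m}^b$ for suitable scalars $c_m^b$; testing against $Z_{\delta_m,q_m}^b$ and using that the $Z^b$ are (almost) orthonormal and span a space almost-orthogonal to $\psi_m$ one first shows $c_m^b\to 0$. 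Hence, up to the $o(1)$ term, $\phi_m$ satisfies $\langle\langle\phi_m,\varphi\rangle\rangle_H = \int_{\partial M} f'(W_{\delta_m,q_m}+\delta_m V_{\delta_m,q_m})\phi_m\,\varphi\,d\sigma + o(\|\varphi\|_H)$ for all $\varphi\in K_{\delta_m,q_m}^{\bot}$, and in fact for all $\varphi\in H^1(M)$ after correcting by the $Z^b$-components.

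The core is a blow-up/rescaling step: set $\tilde\phi_m(y):=\delta_m^{(n-2)/2}\phi_m(\psi_{q_m}^{\partial}(\delta_m y))$ on $\mathbb{R}^n_+$ (suitably cut off), so that $\|\tilde\phi_m\|$ is bounded in $\mathcal{D}^{1,2}(\mathbb{R}^n_+)$; by weak compactness $\tilde\phi_m\rightharpoonup\tilde\phi$. One then checks that the rescaled equation converges: since $W_{\delta_m,q_m}$ rescales to the bubble $U$, $\delta_m V_{\delta_m,q_m}$ rescales to $\delta_m v_{q_m}=o(1)$ in the relevant norm by \eqref{eq:gradvq}, and $f'(U_\delta)$ rescales to $n U^{2/(n-2)}$ times the trace operator, the limit $\tilde\phi$ solves the linearized problem \eqref{eq:linearizzato}:
\[
\left\{\begin{array}{ccc}
-\Delta\tilde\phi = 0 & & \text{on }\mathbb{R}^n_+;\\
\dfrac{\partial\tilde\phi}{\partial t}+nU^{\frac{2}{n-2}}\tilde\phi = 0 & & \text{on }\partial\mathbb{R}^n_+.
\end{array}\right.
\]
By the nondegeneracy of the bubble (Escobar/Almaraz), the space of such $\tilde\phi$ in $\mathcal{D}^{1,2}$ is spanned by $j_1,\dots,j_n$. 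On the other hand, passing to the limit in the orthogonality conditions $\langle\langle\phi_m,Z_{\delta_m,q_m}^b\rangle\rangle_H=0$ — after the same rescaling these become $L^2$- or $\mathcal{D}^{1,2}$-orthogonality of $\tilde\phi_m$ to $j_b$ up to $o(1)$ — forces $\tilde\phi$ to be orthogonal to all $j_b$, hence $\tilde\phi\equiv 0$.

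It remains to upgrade $\tilde\phi_m\rightharpoonup 0$ to a contradiction with $\|\phi_m\|_H=1$. The point is that the nonlinear term $f'(W_{\delta_m,q_m}+\delta_m V_{\delta_m,q_m})[\phi_m]$ is supported (up to exponentially small tails) near $q_m$ at scale $\delta_m$, so $\int_{\partial M}f'(\cdots)\phi_m^2\,d\sigma = \int_{\partial\mathbb{R}^n_+} nU^{2/(n-2)}\tilde\phi_m^2\,dz + o(1) \to 0$ by the local strong $L^2_{\mathrm{loc}}$ convergence $\tilde\phi_m\to 0$ on compact sets (Rellich) together with a uniform decay/tightness estimate at infinity coming from the weight $U^{2/(n-2)}\sim|y|^{-2}$. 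Then, testing the equation for $\phi_m$ against $\phi_m$ itself, $1=\|\phi_m\|_H^2 = \int_{\partial M}f'(\cdots)\phi_m^2\,d\sigma + \langle\langle\phi_m,\psi_m+\sum c_m^b Z^b\rangle\rangle_H = o(1)+o(1)$, the desired contradiction. The main obstacle is the last estimate: controlling the concentration of the quadratic form $\int f'(\cdots)\phi_m^2$ uniformly in $m$ — i.e. showing no mass of $\phi_m$ escapes either to spatial infinity in the rescaled picture or stays spread out at unit scale on $M$ — which requires the decay of the weight and a careful treatment of the cut-off $\chi$; the contribution of the correction term $\delta_m V_{\delta_m,q_m}$ to $f'$ must also be shown to be negligible, which follows from \eqref{eq:gradvq} and $\delta_m\to 0$.
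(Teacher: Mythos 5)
Your proposal follows essentially the same contradiction--blow-up argument as the paper: rescale $\phi_m$ to $\tilde\phi_m$, pass to a weak limit solving the linearized problem \eqref{eq:linearizzato}, use nondegeneracy together with the limiting orthogonality conditions to get $\tilde\phi=0$, and then show the quadratic boundary term vanishes (the weight $U^{\frac{2}{n-2}}$ lies in $L^{n-1}(\partial\mathbb{R}^n_+)$, so weak convergence of $\tilde\phi_m^{2}$ in $L^{\frac{n-1}{n-2}}$ suffices), contradicting $\|\phi_m\|_H=1$. The only step to state more carefully is $c_m^b\to0$: almost-orthonormality of the $Z^b$ alone does not give it, since one must also prove $\int_{\partial M}f'(W_{\delta_m,q_m}+\delta_m V_{\delta_m,q_m})[\phi_m]\,Z^b_{\delta_m,q_m}\,d\sigma=o(1)$, which follows (as in the paper) precisely from the rescaled weak convergence and the limiting orthogonality you already invoke.
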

The proof of this lemma is postponed in the appendix
\begin{lem}
\label{lem:R}Assume $n\ge7$ and $\delta=\lambda\varepsilon$, then
it holds 
\[
\|R\|_{H}=O\left(\varepsilon^{2}\right)
\]
$C^{0}$-uniformly for $q\in\partial M$ and $\lambda$ in a compact
set of $(0,+\infty)$.\end{lem}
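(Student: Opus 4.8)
The plan is to estimate directly the $H^{1}$-norm of the remainder term $R=\Pi^{\bot}\{i^{*}(f(W_{\delta,q}+\delta V_{\delta,q}))-W_{\delta,q}-\delta V_{\delta,q}\}$. Since $\Pi^{\bot}$ is a bounded projection, it suffices to bound $\|i^{*}(f(W_{\delta,q}+\delta V_{\delta,q}))-W_{\delta,q}-\delta V_{\delta,q}\|_{H}$. By the characterization \eqref{eq:istella} of $i^{*}$ and the duality between $H^{1}(M)$ and its dual, this norm equals
\[
\sup_{\|\varphi\|_{H}=1}\left|\int_{\partial M}f(W_{\delta,q}+\delta V_{\delta,q})\varphi\,d\sigma-\langle\langle W_{\delta,q}+\delta V_{\delta,q},\varphi\rangle\rangle_{H}\right|.
\]
So the first step is to integrate by parts in $\langle\langle W_{\delta,q}+\delta V_{\delta,q},\varphi\rangle\rangle_{H}=\int_{M}(\nabla_{g}(W_{\delta,q}+\delta V_{\delta,q})\nabla_{g}\varphi+a(W_{\delta,q}+\delta V_{\delta,q})\varphi)\,d\mu_{g}$, turning it into $\int_{M}(-\Delta_{g}+a)(W_{\delta,q}+\delta V_{\delta,q})\varphi\,d\mu_{g}+\int_{\partial M}\partial_{\nu}(W_{\delta,q}+\delta V_{\delta,q})\varphi\,d\sigma$, so that $R$ is controlled by two error contributions: an interior one, $\|(-\Delta_{g}+a)(W_{\delta,q}+\delta V_{\delta,q})\|$ measured in the relevant dual (Sobolev/Lebesgue) norm, and a boundary one, $\|f(W_{\delta,q}+\delta V_{\delta,q})-\partial_{\nu}(W_{\delta,q}+\delta V_{\delta,q})\|_{L^{2(n-1)/n}(\partial M)}$.

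The second step is to expand both error terms in Fermi coordinates, using the metric expansions \eqref{eq:|g|}--\eqref{eq:gin}. On the interior, $-\Delta_{g}W_{\delta,q}$ differs from the Euclidean $-\Delta W_{\delta,q}=0$ by terms coming from the $g^{ij}-\delta_{ij}=2h_{ij}(0)t+O(|y|^{2})$ correction and from the $|g|^{1/2}$ correction in the Laplace--Beltrami operator; the leading such term is precisely $2h_{ij}(q)t\,\partial_{ij}^{2}U$ in rescaled variables, which is cancelled \emph{by construction} by the equation \eqref{eq:vqdef} defining $v_{q}$ (after multiplying by $\delta$ and rescaling). What survives is $\delta^{2}$ times curvature-type terms (from the $O(|y|^{2})=O(\delta^{2}|x|^{2})$ pieces of the metric acting on $U$, and from $-\Delta_{g}(\delta V_{\delta,q})$ picking up the next-order metric terms acting on $v_{q}$), plus the terms produced by the cutoff $\chi$, which are supported in the annulus $|y|\sim R$ and are exponentially small in $\delta$; the term $aW_{\delta,q}$ is $O(\delta^{2})$ as well. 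On the boundary $t=0$, one has $\partial_{\nu}W_{\delta,q}=-(n-2)U_{\delta}^{n/(n-2)}\cdot(\text{metric factor})$ while $f(W_{\delta,q}+\delta V_{\delta,q})=(n-2)(W_{\delta,q}+\delta V_{\delta,q})^{n/(n-2)}$; expanding the power and using \eqref{eq:Uvq} (the orthogonality $\int_{\partial\mathbb{R}_{+}^{n}}U^{n/(n-2)}v_{q}=0$) to kill the first-order-in-$\delta$ term, together with the fact that the boundary metric factor is $1+O(|z|^{2})=1+O(\delta^{2}|z|^{2})$ since $H_{g}\equiv0$ on $\partial M$, one again gets a remainder of size $\delta^{2}$. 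The decay estimates \eqref{eq:gradvq} for $v_{q}$ and its derivatives, together with the elementary integral bounds for $I_{m}^{\alpha}$ recalled in the appendix, guarantee the relevant integrals converge and produce the stated power of $\delta=\lambda\varepsilon$; here is where the hypothesis $n\ge7$ enters, ensuring that the $\delta^{2}$ estimate is not spoiled by a logarithmic or dimension-dependent blow-up of these integrals.

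The third step is purely bookkeeping: collect all contributions, note that the genuinely worst term is $O(\delta^{2})=O(\varepsilon^{2})$ (the cutoff terms being $O(\varepsilon^{N})$ for any $N$), and check uniformity in $q\in\partial M$ and in $\lambda$ ranging over a compact subset of $(0,+\infty)$ — this is automatic since the metric expansion coefficients are continuous in $q$ on the compact manifold $\partial M$ and the $\delta^{2}$-bound depends on $\lambda$ only through a continuous prefactor. The main obstacle is the second step, and within it the delicate cancellations: one must organize the expansion so that the only $O(\delta)$ contributions, namely $2h_{ij}(q)t\,\partial_{ij}^{2}U$ in the interior and the cross term $2U^{n/(n-2)}v_{q}$-type term on the boundary, are exactly absorbed by the definition \eqref{eq:vqdef} of $v_{q}$ and by the orthogonality \eqref{eq:Uvq}; getting these cancellations right, and verifying that no other $O(\delta)$ term has been overlooked (in particular from the volume form and from $\partial h_{ij}/\partial z_{k}$ terms, which are odd in $z$ and integrate to zero against the even profile), is the crux of the argument.
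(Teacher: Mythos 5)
Your overall strategy is the same as the paper's: reduce $\|R\|_{H}$ (via the characterization \eqref{eq:istella} of $i^{*}$ and a pairing/duality argument) to an interior error $\|(-\Delta_{g}+a)(W_{\delta,q}+\delta V_{\delta,q})\|_{L^{2n/(n+2)}(M)}$ and a boundary error in $L^{2(n-1)/n}(\partial M)$, expand in Fermi coordinates, and use the corrector $v_{q}$ to cancel the $O(\delta)$ terms; in particular your interior cancellation of $2h_{ij}(q)\,t\,\partial^{2}_{ij}U$ between $\Delta_{g}W_{\delta,q}$ and $\delta\Delta_{g}V_{\delta,q}$ is exactly the paper's computation \eqref{eq:R1}--\eqref{eq:R2}.

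However, the mechanism you invoke for the boundary cancellation is wrong. You claim the first-order-in-$\delta$ boundary term is ``killed'' by the orthogonality \eqref{eq:Uvq}, i.e. $\int_{\partial\mathbb{R}^{n}_{+}}U^{n/(n-2)}v_{q}\,dz=0$. An integral identity of this kind gives no control whatsoever on the $L^{2(n-1)/n}(\partial M)$ norm of the term $n\,U^{2/(n-2)}\,\delta v_{q}$ coming from the Taylor expansion of $(W_{\delta,q}+\delta V_{\delta,q})^{n/(n-2)}$; that term is genuinely of size $\delta$ in norm, and if it survived the lemma's $O(\varepsilon^{2})$ bound would fail. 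The correct cancellation is \emph{pointwise} and comes from the Robin boundary condition in \eqref{eq:vqdef}, $\partial_{t}v_{q}+nU^{2/(n-2)}v_{q}=0$: the linear-in-$\delta$ term from the nonlinearity is absorbed by $\delta\,\partial_{\nu}V_{\delta,q}$, which must therefore be kept inside the boundary error (as it is in your splitting), and what remains is $\delta\,n\bigl\|\bigl((U+\theta\delta v_{q})^{+}\bigr)^{2/(n-2)}v_{q}-U^{2/(n-2)}v_{q}\bigr\|_{L^{2(n-1)/n}}$, which one must then show is itself $O(\delta)$ — this requires controlling the set where $U+\theta\delta v_{q}\le0$ using the decay \eqref{eq:gradvq} and $n\ge7$ (this is the technically delicate part of the paper's proof, which your plan does not address). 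The orthogonality \eqref{eq:Uvq} plays no role in this lemma; it is used only later, in the energy expansion of Proposition \ref{lem:expJeps}. A secondary inaccuracy: the cutoff contributions are not exponentially small, since $U$ decays only polynomially; they are of order $\delta^{(n-2)/2}$ (interior) and this is precisely one of the places where $n\ge7$ is needed to make them $o(\delta^{2})$.
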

\begin{proof}
We recall that there is a unique $\Gamma$ such that 
\[
\Gamma=i^{*}\left(f(W_{\delta,q}+\delta V_{\delta,q})\right),
\]
that is, according to (\ref{eq:istella}) equivalent to say that there
exists a unique $\Gamma$ solving
\[
\left\{ \begin{array}{ccc}
-\Delta_{g}\Gamma+a\Gamma=0 &  & \text{on }M;\\
\frac{\partial\Gamma}{\partial\nu}=(n-2)\left((W_{\delta,q}+\delta V_{\delta,q})^{+}\right)^{\frac{n}{n-2}} &  & \text{on \ensuremath{\partial}}M.
\end{array}\right.
\]
By definition of $i^{*}$ we have that
\begin{align*}
\|R\|_{H}^{2}= & \|\Gamma-W_{\delta,q}-\delta V_{\delta,q}\|_{H}^{2}\\
= & \int_{M}\left[-\Delta_{g}(\Gamma-W_{\delta,q}-\delta V_{\delta,q})+a(\Gamma-W_{\delta,q}-\delta V_{\delta,q})\right](\Gamma-W_{\delta,q}-\delta V_{\delta,q})d\mu_{g}\\
 & +\int_{\partial M}\left[\frac{\partial}{\partial\nu}(\Gamma-W_{\delta,q}-\delta V_{\delta,q})\right](\Gamma-W_{\delta,q}-\delta V_{\delta,q})d\sigma\\
= & \int_{M}\left[\Delta_{g}(W_{\delta,q}+\delta V_{\delta,q})-a(W_{\delta,q}+\delta V_{\delta,q})\right]Rd\mu_{g}\\
 & \int_{\partial M}\left[(n-2)\left((W_{\delta,q}+\delta V_{\delta,q})^{+}\right)^{\frac{n}{n-2}}-\frac{\partial}{\partial\nu}(W_{\delta,q}+\delta V_{\delta,q})\right]Rd\sigma
\end{align*}
We have
\begin{equation}
\int_{M}aW_{\delta,q}Rd\mu_{g}\le c\|W_{\delta,q}\|_{L^{\frac{2n}{n+2}}(M)}\|R\|_{L^{\frac{2n}{n-2}}(M)}\le c\delta^{2}\|U\|_{L^{\frac{2n}{n+2}}(\mathbb{R}^{n})}\|R\|_{H}\label{eq:WR}
\end{equation}
and $\|U\|_{L^{\frac{2n}{n+2}}(\mathbb{R}^{n})}$ is bounded since
$n>6$. Moreover
\begin{equation}
\delta\int_{M}aV_{\delta,q}Rd\mu_{g}\le c\delta\|V_{\delta,q}\|_{L^{2}(M)}\|R\|_{L^{2}(M)}\le c\delta^{2}\|v_{q}\|_{L^{2}(\mathbb{R}^{n})}\|R\|_{H}\label{eq:VR}
\end{equation}
and, in light of (\ref{eq:gradvq}), $\|v_{q}\|_{L^{2}(\mathbb{R}^{n})}$
is bounded since $n>6$.

We have 
\begin{align*}
\int_{\partial M}\left[(n-2)W_{\delta,q}^{\frac{n}{n-2}}-\frac{\partial}{\partial\nu}W_{\delta,q}\right]Rd\sigma & \le\left\Vert (n-2)W_{\delta,q}^{\frac{n}{n-2}}-\frac{\partial}{\partial\nu}W_{\delta,q}\right\Vert _{L^{\frac{2(n-1)}{n}}(\partial M)}\|R\|_{H}\\
 & \le c\delta^{2}\|R\|_{H}
\end{align*}
since $U$ is a solution of (\ref{eq:Udelta}). In fact
\begin{multline*}
\left\Vert (n-2)W_{\delta,q}^{\frac{n}{n-2}}-\frac{\partial}{\partial\nu}W_{\delta,q}\right\Vert _{L^{\frac{2(n-1)}{n}}(\partial M)}=\\
\left(\int_{\partial\mathbb{R}_{+}^{n}}|g(\delta z,0)|^{\frac{1}{2}}\left[(n-2)U^{\frac{n}{n-2}}(z,0)\chi^{\frac{n}{n-2}}(\delta z,0)-\chi(\delta z,0)\frac{\partial U}{\partial t}(z,0)\right]^{\frac{2(n-1)}{n}}dz\right)^{\frac{n}{2(n-1)}}\\
\le C\left(\int_{\mathbb{R}^{n-1}}\left[(n-2)U^{\frac{n}{n-2}}(z,0)\left[\chi^{\frac{n}{n-2}}(\delta z,0)-\chi(\delta z,0)\right]\right]^{\frac{2(n-1)}{n}}dz\right)^{\frac{n}{2(n-1)}}=O(\delta^{2}),
\end{multline*}
Now we estimate
\begin{multline*}
\int_{\partial M}\left\{ (n-2)\left[\left((W_{\delta,q}+\delta V_{\delta,q})^{+}\right)^{\frac{n}{n-2}}-W_{\delta,q}^{\frac{n}{n-2}}\right]-\delta\frac{\partial V_{\delta,q}}{\partial\nu}\right\} Rd\sigma\\
\le c\left\Vert (n-2)\left[\left((W_{\delta,q}+\delta V_{\delta,q})^{+}\right)^{\frac{n}{n-2}}-W_{\delta,q}^{\frac{n}{n-2}}\right]-\delta\frac{\partial V_{\delta,q}}{\partial\nu}\right\Vert _{L^{\frac{2(n-1)}{n}}(\partial M)}\|R\|_{H}
\end{multline*}
and, by Taylor expansion and by definition of the function $v_{q}$
(see (\ref{eq:vqdef}) ) 
\begin{multline*}
\left\Vert (n-2)\left[\left((W_{\delta,q}+\delta V_{\delta,q})^{+}\right)^{\frac{n}{n-2}}-W_{\delta,q}^{\frac{n}{n-2}}\right]-\delta\frac{\partial V_{\delta,q}}{\partial\nu}\right\Vert _{L^{\frac{2(n-1)}{n}}(\partial M)}\\
\le\left\Vert (n-2)\left[\left((U+\delta v_{q})^{+}\right)^{\frac{n}{n-2}}-U^{\frac{n}{n-2}}\right]+\delta\frac{\partial v_{q}}{\partial t}\right\Vert _{L^{\frac{2(n-1)}{n}}(\partial\mathbb{R}_{+}^{n})}+o(\delta^{2})\\
\le\delta\left\Vert n\left((U+\theta\delta v_{q})^{+}\right)^{\frac{2}{n-2}}v_{q}+\frac{\partial v_{q}}{\partial t}\right\Vert _{L^{\frac{2(n-1)}{n}}(\partial\mathbb{R}_{+}^{n})}+o(\delta^{2})\\
=\delta n\left\Vert \left((U+\theta\delta v_{q})^{+}\right)^{\frac{2}{n-2}}v_{q}-U^{\frac{2}{n-2}}v_{q}\right\Vert _{L^{\frac{2(n-1)}{n}}(\partial\mathbb{R}_{+}^{n})}+o(\delta^{2}).
\end{multline*}
We observe that, chosen a large positive $R$, we have $U+\theta\delta v_{q}>0$
in $B(0,R)$ for some $\delta$. Moreover, on the complementary of
this ball, we have $\frac{c}{|y|^{n-2}}\le U(y)\le\frac{C}{|y|^{n-2}}$
and $|v_{q}|\le\frac{C_{1}}{|y|^{n-3}}$ for some positive constants
$c,C,C_{1}$. So it is possible to prove that, for $\delta$ small
enough, $U+\theta\delta v_{q}>0$ if $|y|\le1/\delta$. At this point
\begin{multline*}
\int_{\partial\mathbb{R}_{+}^{n}}\left[\left|\left((U+\theta\delta v_{q})^{+}\right)^{\frac{2}{n-2}}-U^{\frac{2}{n-2}}\right||v_{q}|\right]^{\frac{2(n-1)}{n}}\\
=\int_{U+\theta\delta v_{q}>0}\left[\left|\left((U+\theta\delta v_{q})^{+}\right)^{\frac{2}{n-2}}-U^{\frac{2}{n-2}}\right||v_{q}|\right]^{\frac{2(n-1)}{n}}dz\\
+\int_{U+\theta\delta v_{q}\le0}\left[\left|\left((U+\theta\delta v_{q})^{+}\right)^{\frac{2}{n-2}}-U^{\frac{2}{n-2}}\right||v_{q}|\right]^{\frac{2(n-1)}{n}}dz\\
=\delta^{\frac{2(n-1)}{n}}\int_{U+\theta\delta v_{q}>0}\left(U+\theta_{1}\delta v_{q}\right)^{\frac{-2(n-1)(n-4)}{n(n-2)}}|v_{q}|^{\frac{4(n-1)}{n}}dz\\
+\int_{U+\theta\delta v_{q}\le0}U^{\frac{4(n-1)}{n(n-2)}}|v_{q}|^{\frac{2(n-1)}{n}}dz\\
\le\delta^{\frac{2(n-1)}{n}}\int_{U+\theta\delta v_{q}>0}\left(U+\theta_{1}\delta v_{q}\right)^{\frac{-2(n-1)(n-4)}{n(n-2)}}|v_{q}|^{\frac{4(n-1)}{n}}dz\\
+\int_{|z|>\frac{1}{\delta}}U^{\frac{4(n-1)}{n(n-2)}}|v_{q}|^{\frac{2(n-1)}{n}}dz
\end{multline*}
and, since $n>6$ one can check that $\int_{U+\theta\delta v_{q}>0}\left(U+\theta_{1}\delta v_{q}\right)^{\frac{-2(n-1)(n-4)}{n(n-2)}}|v_{q}|^{\frac{4(n-1)}{n}}dz$
is bounded and that
\begin{align*}
\int_{|z|>\frac{1}{\delta}}U^{\frac{4(n-1)}{n(n-2)}}|v_{q}|^{\frac{2(n-1)}{n}}dz & \le C\int_{|z|>\frac{1}{\delta}}\frac{1}{|z|^{\frac{4(n-1)}{n}}}\frac{1}{|z|^{\frac{2(n-1)(n-3)}{n}}}dz\\
 & \le C\int_{\frac{1}{\delta}}^{\infty}r^{-\frac{3n^{2}-12n+10}{n}}=O(\delta^{\frac{3n^{2}-11n+10}{n}})=o(\delta^{\frac{2(n-1)}{n}})
\end{align*}
thus $\left\Vert (n-2)\left[\left((W_{\delta,q}+\delta V_{\delta,q})^{+}\right)^{\frac{n}{n-2}}-W_{\delta,q}^{\frac{n}{n-2}}\right]-\delta\frac{\partial V_{\delta,q}}{\partial\nu}\right\Vert _{L^{\frac{2(n-1)}{n}}(\partial M)}=O(\delta^{2})$
and
\[
\int_{\partial M}\left\{ (n-2)\left[\left((W_{\delta,q}+\delta V_{\delta,q})^{+}\right)^{\frac{n}{n-2}}-W_{\delta,q}^{\frac{n}{n-2}}\right]-\delta\frac{\partial V_{\delta,q}}{\partial\nu}\right\} Rd\sigma\le c\delta^{2}\|R\|_{H}.
\]
To complete the proof we have to estimate 
\[
\int_{M}\left[\Delta_{g}(W_{\delta,q}+\delta V_{\delta,q})\right]Rd\mu_{g}\le\|\Delta_{g}(W_{\delta,q}+\delta V_{\delta,q})\|_{L^{\frac{2n}{n+2}}(M)}\|R\|_{H}.
\]
 We recall that in local charts the Laplace Beltrami operator is 
\begin{eqnarray*}
\Delta_{g}W_{\delta,q} & = & \Delta_{\text{euc}}\left(U_{\delta}(u)\chi(y)\right)+[g^{ij}(y)-\delta_{ij}]\partial_{ij}^{2}\left(U_{\delta}(u)\chi(y)\right)\\
 &  & -g^{ij}(y)\Gamma_{ij}^{k}(y)\partial_{k}\left(U_{\delta}(u)\chi(y)\right)
\end{eqnarray*}
where $i,k=1,\dots,n-1$, $\Delta_{\text{euc}}$ is the euclidean
Laplacian, and $\Gamma_{ij}^{k}$ are the Christoffel symbols. Notice
that, by (\ref{eq:|g|}) and (\ref{eq:gij}) we have that $\Gamma_{ij}^{k}(y)=O(|y|)$.
Now, by (\ref{eq:Udelta}) and (\ref{eq:gij}) we have, in variables
$y=\delta x$, 
\begin{eqnarray}
\Delta_{g}W_{\delta,q} & = & U_{\delta}(u)\Delta_{\text{euc}}\left(\chi(y)\right)+2\nabla U_{\delta}(u)\nabla\chi(y)\nonumber \\
 &  & +[g^{ij}(y)-\delta_{ij}]\partial_{ij}^{2}\left(U_{\delta}(u)\chi(y)\right)-g^{ij}(y)\Gamma_{ij}^{k}(y)\partial_{k}\left(U_{\delta}(u)\chi(y)\right)\nonumber \\
 & = & \frac{1}{\delta^{\frac{n-2}{2}}}\left(2h_{ij}(0)\delta x_{n}\frac{1}{\delta^{2}}\partial_{ij}U(x)+g^{ij}(x)\Gamma_{ij}^{k}(x)\frac{1}{\delta}\partial_{k}U+o(\delta)c(x)\right)\nonumber \\
 & = & \frac{1}{\delta^{\frac{n}{2}}}\left(2h_{ij}(0)x_{n}\partial_{ij}^{2}U(x)+O(\delta)c(x)\right)\label{eq:R1}
\end{eqnarray}
where, with abuse of notation, we call $c(x)$ a suitable function
such that $\left|\int_{\mathbb{R}^{n}}c(x)dx\right|\le C$ for some
$C\in\mathbb{R}^{+}$.

In a similar way, by (\ref{eq:vqdef}) and by (\ref{eq:gij}) we have
\begin{multline}
\delta\Delta_{g}V_{\delta,q}=\\
\frac{\delta}{\delta^{\frac{n-2}{2}}}\left(\frac{1}{\delta^{2}}\Delta_{\text{euc}}v_{q}(x)+\frac{1}{\delta^{2}}[g^{ij}-\delta_{ij}]\partial_{ij}^{2}v_{q}(x)+\delta g(x)\Gamma_{ij}^{k}(x)\frac{1}{\delta}\partial_{k}v_{q}(x)+o(\delta^{2})c(y)\right)\\
=\frac{1}{\delta^{\frac{n}{2}}}\left(-2h_{ij}(0)x_{n}\partial_{ij}^{2}U(y)+O(\delta)c(y)\right)\label{eq:R2}
\end{multline}
Thus, in local chart by (\ref{eq:R1}) and (\ref{eq:R2}) we get
\begin{equation}
\|\Delta_{g}(W_{\delta,q}+\delta V_{\delta,q})\|_{L^{\frac{2n}{n+2}}(M)}=\delta^{n\frac{n+2}{2n}}\frac{1}{\delta^{\frac{n}{2}}}O(\delta)=O(\delta^{2})\label{eq:deltaw+v}
\end{equation}
and we obtain the proof, once we set $\delta=\lambda\varepsilon$. \end{proof}
\begin{rem}
\label{rem:N}We have that the nonlinear operator $N$ (see (\ref{eq:defN}))
is a contraction. By the properties of $i^{*}$ and using the expansion
of $f_{\varepsilon}(W_{\delta,q}+\phi_{1}+\delta V_{\delta,q})$ centered
in$W_{\delta,q}+\phi_{2}+\delta V_{\delta,q}$ we have 
\begin{multline*}
\|N(\phi_{1})-N(\phi_{2})\|_{H}\\
\le\left\Vert \left(f'\left(W_{\delta,q}+\theta\phi_{1}+(1-\theta)\phi_{2}+\delta V_{\delta,q}\right)-f'(W_{\delta,q}+\delta V_{\delta,q})\right)[\phi_{1}-\phi_{2}]\right\Vert _{L^{\frac{2(n-1)}{n}}(\partial M)}
\end{multline*}
and, since $|\phi_{1}-\phi_{2}|^{\frac{2(n-1)}{n}}\in L^{\frac{n}{n-2}}(\partial M)$
and $|f_{\varepsilon}'(\cdot)|^{\frac{2(n-1)}{n}}\in L^{\frac{n}{2}}(\partial M)$,
we have 
\begin{multline*}
\|N(\phi_{1})-N(\phi_{2})\|_{H}\\
\le\left\Vert \left(f'\left(W_{\delta,q}+\theta\phi_{1}+(1-\theta)\phi_{2}+\delta V_{\delta,q}\right)-f'(W_{\delta,q})+\delta V_{\delta,q}\right)\right\Vert _{L^{\frac{2(n-1)}{n-2}}(\partial M)}\|\phi_{1}-\phi_{2}\|_{H}\\
=\beta\|\phi_{1}-\phi_{2}\|_{H}
\end{multline*}
where 
$$\beta=\left\Vert \left(f_{\varepsilon}'\left(W_{\delta,q}+\theta\phi_{1}+(1-\theta)\phi_{2}+\delta V_{\delta,q}\right)-f_{\varepsilon}'(W_{\delta,q}+\delta V_{\delta,q})\right)\right\Vert _{L^{\frac{2(n-1)}{n-2}}(\partial M)}<1,$$
provided $\|\phi_{1}\|_{H}$ and $\|\phi_{2}\|_{H}$ sufficiently
small. 

In the same way we can prove that $\|N(\phi)\|_{H}\le\bar\beta\|\phi\|_{H}$
with $\bar\beta<1$ if $\|\phi\|_{H}$ is sufficiently small.\end{rem}
\begin{prop}
\label{prop:EsistenzaPhi}Let $\delta=\varepsilon\lambda$ For $a,b\in\mathbb{R}$,
$0<a<b$ there exists a positive constant $C=C(a,b)$ such that, for
$\varepsilon$ small, for any $q\in\partial M$, for any $\lambda\in[a,b]$
there exists a unique $\Phi=\Phi_{\varepsilon,\delta,q}\in K_{\delta,q}^{\bot}$
which solves (\ref{eq:P-Kort}) such that 
\[
\|\Phi\|_{H}\le C\varepsilon^{2}
\]
\end{prop}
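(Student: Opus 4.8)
The plan is to solve the projected equation \eqref{eq:P-Kort} by a standard fixed-point argument, using the invertibility of the linear operator $L_{\delta,q}$ established in Lemma \ref{prop:L}, the estimate on the remainder $R$ from Lemma \ref{lem:R}, and the contraction property of $N$ recorded in Remark \ref{rem:N}. First I would rewrite \eqref{eq:P-Kort} in the equivalent form
\[
\Phi=L_{\delta,q}^{-1}\left(N(\Phi)+R-\Pi^{\bot}\left\{i^{*}\left(\varepsilon\gamma(W_{\delta,q}+\delta V_{\delta,q}+\Phi)\right)\right\}\right)=:\mathcal{T}(\Phi),
\]
which makes sense because Lemma \ref{prop:L} guarantees that $L_{\delta,q}:K_{\delta,q}^{\bot}\to K_{\delta,q}^{\bot}$ is invertible with $\|L_{\delta,q}^{-1}\|\le 1/C$ uniformly in $q\in\partial M$ and $\lambda\in[a,b]$, for $\varepsilon$ small.

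Next I would check that $\mathcal{T}$ maps a suitable ball $B_{\rho}=\{\Phi\in K_{\delta,q}^{\bot}:\|\Phi\|_{H}\le \rho\}$ into itself, with $\rho=C'\varepsilon^{2}$ for an appropriate constant $C'$. Indeed, $\|R\|_{H}=O(\varepsilon^{2})$ by Lemma \ref{lem:R}; the perturbative term is controlled using the continuity of $i^{*}$ and of the trace embedding $H^{1}(M)\hookrightarrow L^{2(n-1)/(n-2)}(\partial M)$ by
\[
\left\|\Pi^{\bot}\left\{i^{*}\left(\varepsilon\gamma(W_{\delta,q}+\delta V_{\delta,q}+\Phi)\right)\right\}\right\|_{H}\le c\varepsilon\left(\|W_{\delta,q}\|_{H}+\delta\|V_{\delta,q}\|_{H}+\|\Phi\|_{H}\right)\le c\varepsilon\left(C''+\rho\right),
\]
which is $O(\varepsilon)$ — here one uses that $\|W_{\delta,q}\|_{H}$ and $\delta\|V_{\delta,q}\|_{H}$ are bounded uniformly, the latter because $\|v_q\|_{H^1(\mathbb{R}^n_+)}$ is finite for $n\ge 7$ by \eqref{eq:gradvq}; and $\|N(\Phi)\|_{H}\le\bar\beta\|\Phi\|_{H}$ with $\bar\beta<1$ by Remark \ref{rem:N}. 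Summing, $\|\mathcal{T}(\Phi)\|_{H}\le \frac1C\left(O(\varepsilon^2)+O(\varepsilon)\right)$, so $\mathcal{T}(B_\rho)\subset B_\rho$ once $\rho$ is chosen of order $\varepsilon$. To then upgrade the bound to $\|\Phi\|_H\le C\varepsilon^2$ one iterates: since $\Phi$ is a fixed point, $\|\Phi\|_H\le \frac1C(\|N(\Phi)\|_H+\|R\|_H+c\varepsilon(C''+\|\Phi\|_H))$; absorbing $\bar\beta\|\Phi\|_H/C$ and $c\varepsilon\|\Phi\|_H/C$ into the left side (both have small coefficient for $\varepsilon$ small) leaves $\|\Phi\|_H\le c(\|R\|_H+\varepsilon)$. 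The point is that the genuinely $O(\varepsilon)$ contribution $\varepsilon i^*(\gamma W_{\delta,q})$ is, after projecting onto $K_{\delta,q}^\bot$, actually of order $\varepsilon^2$: one must show $\langle\langle i^*(\gamma W_{\delta,q}), Z_{\delta,q}^b\rangle\rangle_H = \int_{\partial M}\gamma W_{\delta,q} Z_{\delta,q}^b\,d\sigma$ is $O(\varepsilon)$ (by oddness/parity of the $j_b$ against $U$ and a Taylor expansion of $\gamma$ about $q$), so that $\Pi^\bot$ changes $i^*(\gamma W_{\delta,q})$ only by a term that, together with the $\delta V_{\delta,q}$ piece, contributes $O(\varepsilon^2)$ to the right-hand side — giving the sharp estimate $\|\Phi\|_H\le C\varepsilon^2$.

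For uniqueness, I would show $\mathcal{T}$ is a contraction on $B_\rho$: for $\Phi_1,\Phi_2\in B_\rho$,
\[
\|\mathcal{T}(\Phi_1)-\mathcal{T}(\Phi_2)\|_H\le\frac1C\left(\|N(\Phi_1)-N(\Phi_2)\|_H+c\varepsilon\|\Phi_1-\Phi_2\|_H\right)\le\frac1C(\beta+c\varepsilon)\|\Phi_1-\Phi_2\|_H,
\]
and $\frac1C(\beta+c\varepsilon)<1$ for $\varepsilon$ small by Remark \ref{rem:N}. The Banach fixed point theorem then yields the unique $\Phi=\Phi_{\varepsilon,\delta,q}$. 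Uniformity in $q$ and $\lambda$ is automatic since all the constants in Lemmas \ref{prop:L}, \ref{lem:R} and Remark \ref{rem:N} are uniform. The main obstacle is the bookkeeping in the last step: naively the perturbative term $\varepsilon i^*(\gamma(W_{\delta,q}+\delta V_{\delta,q}+\Phi))$ is only $O(\varepsilon)$, so to reach the claimed $O(\varepsilon^2)$ bound one really needs the cancellation coming from projecting the leading $\varepsilon\gamma W_{\delta,q}$ term off the kernel $K_{\delta,q}$ — verifying that this projection (equivalently the pairings with the $Z_{\delta,q}^b$) is small requires the parity of $j_1,\dots,j_{n-1}$ and of $j_n$ against $U^{n/(n-2)}$ on $\partial\mathbb{R}^n_+$ together with the $C^1$ regularity of $\gamma$ and the Taylor expansions \eqref{eq:|g|}–\eqref{eq:gin} of the metric.
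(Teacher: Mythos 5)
Your fixed-point setup (inverting $L_{\delta,q}$ via Lemma \ref{prop:L}, using Lemma \ref{lem:R} for $R$ and Remark \ref{rem:N} for $N$, and running Banach's theorem on a ball of radius $O(\varepsilon^{2})$) is exactly the paper's scheme, and your contraction/uniqueness step is fine. The genuine gap is in how you treat the perturbative term $\varepsilon\,i^{*}\bigl(\gamma(W_{\delta,q}+\delta V_{\delta,q}+\Phi)\bigr)$. You first bound it by $c\varepsilon\bigl(\|W_{\delta,q}\|_{H}+\delta\|V_{\delta,q}\|_{H}+\|\Phi\|_{H}\bigr)=O(\varepsilon)$, and then try to recover the needed $O(\varepsilon^{2})$ by arguing that the projection $\Pi^{\bot}$ kills the $O(\varepsilon)$ part because the pairings $\langle\!\langle i^{*}(\gamma W_{\delta,q}),Z_{\delta,q}^{b}\rangle\!\rangle_{H}$ are small. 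This mechanism is backwards: since $\Pi^{\bot}u=u-\Pi u$ and $\Pi u$ is controlled precisely by those pairings, showing they are small shows that $\Pi^{\bot}$ changes $i^{*}(\gamma W_{\delta,q})$ by very little, i.e. $\Pi^{\bot}i^{*}(\gamma W_{\delta,q})\approx i^{*}(\gamma W_{\delta,q})$; a projection onto $K_{\delta,q}^{\bot}$ can only help if the function is essentially \emph{in} $K_{\delta,q}$, which is not the case here. So, as written, your argument only yields $\|\Phi\|_{H}=O(\varepsilon)$, not the claimed $O(\varepsilon^{2})$.

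The correct (and paper's) point is that no cancellation is needed: one must measure the perturbation in the dual trace norm, not in $H$. By \eqref{eq:istella} and duality with the critical trace embedding, $i^{*}$ is bounded from $L^{\frac{2(n-1)}{n}}(\partial M)$ into $H^{1}(M)$, and since the exponent $\frac{2(n-1)}{n}$ is strictly subcritical while $W_{\delta,q}$ concentrates at scale $\delta$, a change of variables gives $\|W_{\delta,q}\|_{L^{\frac{2(n-1)}{n}}(\partial M)}=O(\delta)$ (and, using \eqref{eq:gradvq} and $n\ge7$, $\delta\|V_{\delta,q}\|_{L^{\frac{2(n-1)}{n}}(\partial M)}=O(\delta^{2})$). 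Hence
\[
\bigl\Vert i^{*}\bigl(\varepsilon\gamma(W_{\delta,q}+\delta V_{\delta,q}+\phi)\bigr)\bigr\Vert_{H}\le C\varepsilon\Bigl(\|W_{\delta,q}+\delta V_{\delta,q}\|_{L^{\frac{2(n-1)}{n}}(\partial M)}+\|\phi\|_{H}\Bigr)\le C\bigl(\varepsilon^{2}+\varepsilon\|\phi\|_{H}\bigr),
\]
which, combined with $\|R\|_{H}=O(\varepsilon^{2})$ and the contraction property of $N$, makes $T$ a contraction of the ball $\{\|\phi\|_{H}\le2C\varepsilon^{2}\}$ into itself and yields the stated estimate directly. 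Replacing your projection-cancellation step by this dual-norm estimate closes the gap; the parity considerations you invoke are not needed for this proposition.
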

\begin{proof}
By Remark \ref{rem:N} we have that $N$ is a contraction. Moreover,
by Lemma \ref{prop:L} and by Lemma \ref{lem:R} there exists $C>0$
such that
\[
\left\Vert L^{-1}\left(N(\phi)+R-\Pi^{\bot}\left\{ i^{*}\left(\varepsilon\gamma(W_{\delta,q}+\delta V_{\delta,q}+\phi)\right)\right\} \right)\right\Vert _{H}\le C\left((\beta+\varepsilon)\|\phi\|_{H}+\varepsilon^{2}\right).
\]
In fact, we have 
\begin{align*}
\left\Vert i^{*}\left(\varepsilon\gamma(W_{\varepsilon\lambda,q}+\varepsilon\lambda V_{\varepsilon\lambda,q}+\phi)\right)\right\Vert _{H} & \le\varepsilon\left(\left\Vert W_{\varepsilon\lambda,q}+\varepsilon\lambda V_{\varepsilon\lambda,q}\right\Vert _{L^{\frac{2(n-1)}{n}}}+\left\Vert \phi\right\Vert _{H}\right)\\
 & \le C(\varepsilon^{2}+\varepsilon\left\Vert \phi\right\Vert _{H})
\end{align*}
Notice that, given $C>0$, in Remark \ref{rem:N} it is possible (up
to choose $\|\phi\|_{H}$ sufficiently small) to choose $0<C(\beta+\varepsilon)<1/2$. 

Now, if $\|\phi\|_{H}\le2C\varepsilon^{2}$ then the map
\[
T(\phi):=L^{-1}\left(N(\phi)+R-\Pi^{\bot}\left\{ i^{*}\left(\varepsilon\gamma(W_{\delta,q}+\delta V_{\delta,q}+\phi)\right)\right\} \right)
\]
is a contraction from the ball $\|\phi\|_{H}\le2C\varepsilon^{2}$
in itself, so, by the fixed point Theorem, there exists a unique $\Phi$
with $\|\Phi\|_{H}\le2C\varepsilon^{2}$ solving (\ref{eq:P-Kort}).
The regularity of the map $q\mapsto\Phi$ can be proven via the implicit
function Theorem.
\end{proof}

\section{The reduced functional}\label{tre}
\begin{lem}
\label{lem:JWpiuPhi}Assume $n\ge7$ and $\delta=\lambda\varepsilon$.
It holds 
\[
J_{\varepsilon}(W_{\delta,q}+\delta V_{\delta,q}+\Phi)-J_{\varepsilon}(W_{\delta,q}+\delta V_{\delta,q})=o\left(\varepsilon^{2}\right)
\]
$C^{0}$-uniformly for $q\in\partial M$ and $\lambda$ in a compact
set of $(0,+\infty)$.\end{lem}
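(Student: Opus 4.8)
The plan is to expand the functional $J_\varepsilon$ around the approximate solution $W_{\delta,q}+\delta V_{\delta,q}$ in the direction $\Phi$ and exploit the fact that $\Phi$ is small (of order $\varepsilon^2$ by Proposition \ref{prop:EsistenzaPhi}) and that $\Phi\in K_{\delta,q}^\bot$ solves the projected equation \eqref{eq:P-Kort}. Write $V:=\delta V_{\delta,q}$ and $W:=W_{\delta,q}$ for brevity. By a second-order Taylor expansion in $t\in[0,1]$ of $t\mapsto J_\varepsilon(W+V+t\Phi)$ one gets
\[
J_{\varepsilon}(W+V+\Phi)-J_{\varepsilon}(W+V)=J_{\varepsilon}'(W+V)[\Phi]+\frac12 J_{\varepsilon}''(W+V+\theta\Phi)[\Phi,\Phi]
\]
for some $\theta\in(0,1)$. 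The second step is to estimate these two terms separately.

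For the quadratic term, one bounds $\tfrac12 J_{\varepsilon}''(W+V+\theta\Phi)[\Phi,\Phi]$ by $C\|\Phi\|_H^2$; this follows from the uniform boundedness of the bilinear form $J_\varepsilon''$ on bounded sets (the linear part is controlled by the equivalence of norms, and the nonlinear part by the Sobolev embedding $i\colon H^1(M)\hookrightarrow L^{\frac{2(n-1)}{n-2}}(\partial M)$ together with H\"older, exactly as in the contraction estimates of Remark \ref{rem:N}). Since $\|\Phi\|_H\le C\varepsilon^2$, this term is $O(\varepsilon^4)=o(\varepsilon^2)$. For the linear term $J_{\varepsilon}'(W+V)[\Phi]$, the key point is that $\Phi\in K_{\delta,q}^\bot$, so the $i^\ast$-reformulation \eqref{eq:P-Kort} gives, after pairing with $\Phi$ in the $\langle\langle\cdot,\cdot\rangle\rangle_H$ product and using that $\Pi^\bot\Phi=\Phi$,
\[
J_{\varepsilon}'(W+V)[\Phi]=\big\langle\big\langle R+\,(\text{terms involving }i^\ast(\varepsilon\gamma(\cdot))\text{ and the }f\text{-difference}),\Phi\big\rangle\big\rangle_H ,
\]
which by Cauchy--Schwarz is bounded by $\big(\|R\|_H+C\varepsilon+\bar\beta\|\Phi\|_H\big)\|\Phi\|_H$. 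By Lemma \ref{lem:R} this is $O(\varepsilon^2)\cdot O(\varepsilon^2)=O(\varepsilon^4)=o(\varepsilon^2)$, and the $C^0$-uniformity in $q$ and in $\lambda$ on compacta is inherited from the corresponding uniformity already established in Lemma \ref{lem:R} and Proposition \ref{prop:EsistenzaPhi}.

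More precisely, I would organize the linear term directly from the definition: $J_\varepsilon'(W+V)[\Phi]=\langle\langle W+V-i^\ast(f(W+V)-\varepsilon\gamma(W+V)),\Phi\rangle\rangle_H$. Using $\Phi\in K^\bot_{\delta,q}$ and the definitions \eqref{eq:defN}--\eqref{eq:defR} of $N$ and $R$, this rewrites as $\langle\langle -R-\Pi^\bot i^\ast(\varepsilon\gamma(W+V)),\Phi\rangle\rangle_H$, and then $|J_\varepsilon'(W+V)[\Phi]|\le(\|R\|_H+\|i^\ast(\varepsilon\gamma(W+V))\|_H)\|\Phi\|_H\le C(\varepsilon^2+\varepsilon^2)\varepsilon^2$, where the bound $\|i^\ast(\varepsilon\gamma(W+V))\|_H\le C\varepsilon^2$ is the same estimate used in the proof of Proposition \ref{prop:EsistenzaPhi}. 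The main obstacle, modest as it is, is bookkeeping: making sure the quadratic remainder $J_\varepsilon''(W+V+\theta\Phi)[\Phi,\Phi]$ is genuinely $O(\|\Phi\|_H^2)$ uniformly, which requires that $W+V+\theta\Phi$ stays in a fixed bounded set of $H^1(M)$ (true since $\|W+V\|_H$ is bounded and $\|\Phi\|_H\to0$) and that the superlinear term $f$ has the right growth so that $f'$ composed with these functions is controlled in $L^{\frac{2(n-1)}{n-2}}(\partial M)$; all of this is already in place from Remark \ref{rem:N}. No single step is hard; the role of the lemma is simply to justify that in computing the reduced energy one may replace $W_{\delta,q}+\delta V_{\delta,q}+\Phi$ by $W_{\delta,q}+\delta V_{\delta,q}$ up to $o(\varepsilon^2)$.
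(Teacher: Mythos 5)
Your proof is correct, and for the linear term it takes a genuinely more economical route than the paper. The paper, after the same second-order Taylor expansion and the same bound $\tfrac12 J_\varepsilon''(W+\delta V+\theta\Phi)[\Phi,\Phi]=O(\|\Phi\|_H^2)$, estimates $J_\varepsilon'(W+\delta V)[\Phi]$ by writing it out explicitly and treating each piece separately: the terms $\int_M aW_{\delta,q}\Phi$, $\delta\int_M aV_{\delta,q}\Phi$, $\int_{\partial M}\varepsilon\gamma(W_{\delta,q}+\delta V_{\delta,q})\Phi$, an integration by parts producing $\int_M\Delta_g(W_{\delta,q}+\delta V_{\delta,q})\Phi$ and $\delta\int_{\partial M}\partial_\nu V_{\delta,q}\,\Phi$, and finally the boundary difference $\int_{\partial M}\bigl[(n-2)((W_{\delta,q}+\delta V_{\delta,q})^+)^{\frac{n}{n-2}}-\partial_\nu W_{\delta,q}\bigr]\Phi$ — i.e.\ it essentially redoes, inside this lemma, the expansions that underlie Lemma \ref{lem:R} (and some of its bounds are only $o(1)\|\Phi\|_H$, which is why the paper states $o(\varepsilon^2)$). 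You instead use the abstract identity $J_\varepsilon'(W+\delta V)[\Phi]=\langle\langle (W+\delta V)-i^*\bigl(f(W+\delta V)-\varepsilon\gamma(W+\delta V)\bigr),\Phi\rangle\rangle_H$, exploit $\Phi\in K_{\delta,q}^\bot$ to insert $\Pi^\bot$, recognize $\Pi^\bot\{(W+\delta V)-i^*(f(W+\delta V))\}=-R$, and conclude by Cauchy--Schwarz with Lemma \ref{lem:R} and the bound $\|i^*(\varepsilon\gamma(W+\delta V))\|_H\le C\varepsilon^2$ already used in Proposition \ref{prop:EsistenzaPhi}. This reuses Lemma \ref{lem:R} as a black box, shortens the argument, and in fact yields the stronger estimate $O(\varepsilon^4)$; the uniformity in $(q,\lambda)$ is inherited exactly as you say. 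Two cosmetic remarks: in your first sketch the phrase about an ``$f$-difference'' term (and the $\bar\beta\|\Phi\|_H$ contribution) is spurious, since no $\Phi$ appears inside the nonlinearity at the linearization point $W+\delta V$ --- your ``more precisely'' paragraph has the correct identity with only $R$ and the $\varepsilon\gamma$ term; and the sign in front of $\Pi^\bot i^*(\varepsilon\gamma(W+\delta V))$ should be $+$, which is irrelevant after Cauchy--Schwarz.
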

\begin{proof}
We know that $\|\Phi\|_{H}=O(\varepsilon^{2})$, so we estimate, for
some $\theta\in(0,1)$
\begin{multline*}
J_{\varepsilon}(W_{\delta,q}+\delta V_{\delta,q}+\Phi)-J_{\varepsilon}(W_{\delta,q}+\delta V_{\delta,q})=J_{\varepsilon}'(W_{\delta,q}+\delta V_{\delta,q})[\Phi]\\
+\frac12 J_{\varepsilon}''(W_{\delta,q}+\delta V_{\delta,q}+\theta\Phi)[\Phi,\Phi]\\
=\int_{M}\left(\nabla_{g}W_{\delta,q}+\delta\nabla_{g}V_{\delta,q}\right)\nabla\Phi+a\left(W_{\delta,q}+\delta V_{\delta,q}\right)\Phi d\mu_{g}\\
+\int_{\partial M}\varepsilon\gamma\left(W_{\delta,q}+\delta V_{\delta,q}\right)\Phi d\sigma-(n-2)\int_{\partial M}\left(\left(W_{\delta,q}+\delta V_{\delta,q}\right)^{+}\right)^{\frac{n}{n-2}}\Phi d\sigma\\
+\frac12 \int_{M}|\nabla\Phi|^{2}+a\Phi^{2}d\mu_{g}+\frac12\int_{\partial M}\varepsilon\gamma\Phi^{2}d\sigma\\
-\frac{n}2\int_{\partial M}\left(\left(W_{\delta,q}+\delta V_{\delta,q}+\theta\Phi\right)^{+}\right)^{\frac{2}{n-2}}\Phi^{2}d\sigma.
\end{multline*}
Immediately we have, by Holder inequality, and setting $\delta=\varepsilon\lambda$,
\[
\int_{M}|\nabla\Phi|^{2}+a\Phi^{2}d\mu_{g}+\int_{\partial M}\varepsilon\gamma\Phi^{2}d\sigma\le C\|\Phi\|_{H}^{2}=o(\varepsilon^{2});
\]
\[
\int_{M}aW_{\delta,q}\Phi d\mu_{g}\le C\|W_{\delta,q}\|_{L^{\frac{2n}{n+2}}(M)}\|\Phi\|_{L^{\frac{2n}{n-2}}(M)}\le C\delta^{2}\|\Phi\|_{H}=o(\varepsilon^{2});
\]
\[
\delta\int_{M}aV_{\delta,q}\Phi d\mu_{g}\le C\delta\|V_{\delta,q}\|_{L^{2}(M)}\|\Phi\|_{L^{2}(M)}\le C\delta^{2}\|\Phi\|_{H}=o(\varepsilon^{2});
\]
\begin{align*}
\int_{\partial M}\varepsilon\gamma\left(W_{\delta,q}+\delta V_{\delta,q}\right)\Phi d\sigma & \le C\varepsilon\|W_{\delta,q}+\delta V_{\delta,q}\|_{L^{\frac{2(n-1)}{n}}(\partial M)}\|\Phi\|_{L^{\frac{2(n-1)}{n-2}}(\partial M)}\\
 & \le\varepsilon C\delta\|\Phi\|_{H}=o(\varepsilon^{2})
\end{align*}
\begin{align*}
\int_{\partial M}\left(\left(W_{\delta,q}+\delta V_{\delta,q}+\theta\Phi\right)^{+}\right)^{\frac{2}{n-2}}\Phi^{2}d\sigma & \le C\|\Phi\|_{H}^{2}\left(\left\Vert W_{\delta,q}+\delta V_{\delta,q}+\theta\Phi\right\Vert _{L^{\frac{2(n-1)}{n-2}}(\partial M)}^{\frac{2}{n-2}}\right)\\
 & \le C\|\Phi\|_{H}^{2}=o(\varepsilon^{2});
\end{align*}
By integration by parts we have 
\begin{multline*}
\int_{M}\left(\nabla_{g}W_{\delta,q}+\delta\nabla_{g}V_{\delta,q}\right)\nabla\Phi d\mu_{g}=-\int_{M}\Delta_{g}\left(W_{\delta,q}+\delta V_{\delta,q}\right)\Phi d\mu_{g}\\
+\int_{\partial M}\left(\frac{\partial}{\partial\nu}W_{\delta,q}+\delta\frac{\partial}{\partial\nu}V_{\delta,q}\right)\Phi d\mu_{g}.
\end{multline*}
and, as in (\ref{eq:deltaw+v}) we get 
\[
\int_{M}\Delta_{g}\left(W_{\delta,q}+\delta V_{\delta,q}\right)\Phi d\mu_{g}\le\|\Delta_{g}(W_{\delta,q}+\delta V_{\delta,q})\|_{L^{\frac{2n}{n+2}}(M)}\|\Phi\|_{H}=O(\delta^{2})\|\Phi\|_{H}=o(\varepsilon^{2})
\]
once we set $\delta=\varepsilon\lambda$. Moreover, by Holder inequality,
\[
\int_{\partial M}\delta\frac{\partial}{\partial\nu}V_{\delta,q}\Phi d\mu_{g}\le\delta\left\Vert \frac{\partial}{\partial\nu}V_{\delta,q}\right\Vert _{L^{\frac{2(n-1)}{n}}(\partial M)}\|\Phi\|_{L^{\frac{2(n-1)}{n-2}}(\partial M)}\le O(\delta)\|\Phi\|_{H}=o(\varepsilon^{2}).
\]
In the end we need to verify that 
\begin{multline*}
\int_{\partial M}\left[(n-2)\left(\left(W_{\delta,q}+\delta V_{\delta,q}\right)^{+}\right)^{\frac{n}{n-2}}-\frac{\partial}{\partial\nu}W_{\delta,q}\right]\Phi d\sigma\\
=\left\Vert (n-2)\left(\left(W_{\delta,q}+\delta V_{\delta,q}\right)^{+}\right)^{\frac{n}{n-2}}-\frac{\partial}{\partial\nu}W_{\delta,q}\right\Vert _{L^{\frac{2(n-1)}{n}}(\partial M)}\|\Phi\|_{L^{\frac{2(n-1)}{n-2}}(\partial M)}\\
=o(1)\|\Phi\|_{H}=o(\varepsilon^{2})
\end{multline*}
In fact, by (\ref{eq:vqdelta}), (\ref{eq:vqdef}) and by taylor expansion
we have 
\begin{multline*}
\int_{\partial M}\left[(n-2)\left(\left(W_{\delta,q}+\delta V_{\delta,q}\right)^{+}\right)^{\frac{n}{n-2}}-\frac{\partial}{\partial\nu}W_{\delta,q}\right]^{\frac{2(n-1)}{n}}d\sigma\\
\le\int_{\partial\mathbb{R}_{+}^{n}}\left[(n-2)\left(\left(U_{\delta}+\delta\left(v_{q}\right)_{\delta}\right)^{+}\right)^{\frac{n}{n-2}}+\frac{\partial}{\partial t}U_{\delta}\right]^{\frac{2(n-1)}{n}}dz+o(1)\\
\le\int_{\partial\mathbb{R}_{+}^{n}}\left[n\left(\left(U_{\delta}+\theta\delta\left(v_{q}\right)_{\delta}\right)^{+}\right)^{\frac{2}{n-2}}\delta\left(v_{q}\right)_{\delta}\right]^{\frac{2(n-1)}{n}}dz+o(1)=o(1),
\end{multline*}
which concludes the proof.\end{proof}
\begin{prop}
\label{lem:expJeps}Assume $n\ge7$ and $\delta=\lambda\varepsilon$.
It holds 
\[
J_{\varepsilon}(W_{\lambda\varepsilon,q}+\lambda\varepsilon V_{\lambda\varepsilon,q})=A+\varepsilon^{2}\left[\lambda B\gamma(q)+\lambda^{2}\varphi(q)\right]+o(\varepsilon^{2}),
\]
$C^{0}$-uniformly for $q\in\partial M$ and $\lambda$ in a compact
set of $(0,+\infty)$, where (see \eqref{new})
\begin{equation}\label{phiq}
\varphi(q)= \frac{1}{2}\int_{\mathbb{R}_{+}^{n}}\Delta v_{q}v_qdzdt-\frac{(n-6)(n-2)\omega_{n-1}I_{n-1}^{n}}{4(n-1)^{2}(n-4)}\|\pi(q)\|^{2}\le0.
\end{equation}
\[
B=\frac{n-2}{n-1}\omega_{n-1}I_{n-1}^{n}>0
\]
and 
\begin{align*}
A & =\frac{1}{2}\int_{\mathbb{R}_{+}^{n}}|\nabla U(z,t)|^{2}dzdt-\frac{(n-2)^{2}}{2(n-1)}\int_{\partial\mathbb{R}_{+}^{n}}U(z,0)^{\frac{2(n-1)}{n-2}}dz\\
 & =\frac{(n-2)(n-3)}{2(n-1)^{2}}\omega_{n-1}I_{n-1}^{n}>0
\end{align*}
\end{prop}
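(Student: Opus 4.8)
The plan is to expand the functional $J_\varepsilon$ evaluated at the approximate solution $W_{\delta,q}+\delta V_{\delta,q}$ directly, using the Taylor expansions \eqref{eq:|g|}--\eqref{eq:gin} of the metric in Fermi coordinates, and the defining properties \eqref{eq:Udelta}, \eqref{eq:vqdef} of $U_\delta$ and $v_q$. First I would split $J_\varepsilon$ into three pieces: the ``quadratic'' Dirichlet-type term $\frac12\int_M(|\nabla_g u|^2+au^2)$, the linear perturbation term $\frac\varepsilon2\int_{\partial M}\gamma u^2$, and the boundary nonlinearity $-\frac{(n-2)^2}{2(n-1)}\int_{\partial M}(u^+)^{\frac{2(n-1)}{n-2}}$. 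Substituting $u=W_{\delta,q}+\delta V_{\delta,q}$ and changing variables $y=\delta x$ turns each integral into an integral over $\mathbb{R}^n_+$ (up to the cut-off error, which is $O(\delta^{n-2})=o(\varepsilon^2)$ since $n\ge7$), and one expands $|g|^{1/2}$ and $g^{ij}$ to second order. The leading order of the quadratic term minus the leading order of the nonlinearity gives the constant $A$; the cross terms between $W_{\delta,q}$ and $\delta V_{\delta,q}$ together with the $\delta^2$-order corrections coming from $h_{ij}(0)$ and the curvature terms in the metric produce the $\lambda^2\varphi(q)\varepsilon^2$ term; the perturbation term contributes $\varepsilon\cdot\delta\cdot$(leading boundary integral of $U^2$)$=\lambda B\gamma(q)\varepsilon^2$ at main order.

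The key computational steps, in order, are: (i) compute $A=\frac12\int_{\mathbb R^n_+}|\nabla U|^2 - \frac{(n-2)^2}{2(n-1)}\int_{\partial\mathbb R^n_+}U^{\frac{2(n-1)}{n-2}}$ by integrating by parts using \eqref{eq:Udelta}, reducing it to $\frac{n-3}{2(n-1)}(n-2)\int_{\partial\mathbb R^n_+}U^{\frac{2(n-1)}{n-2}}$ and evaluating that integral via the $I_m^\alpha$ quantities; (ii) extract the perturbation contribution $\frac\varepsilon2\int_{\partial M}\gamma(W_{\delta,q}+\delta V_{\delta,q})^2\,d\sigma$: at main order $\gamma(q)\delta\int_{\partial\mathbb R^n_+}U(z,0)^2\,dz$, giving $B=\frac{n-2}{n-1}\omega_{n-1}I_{n-1}^n$ once one checks the $U^2$ integral reduces to that constant (note $U(z,0)^2=((1+0)^2+|z|^2)^{-(n-2)}$ and the relevant exponent matching makes $I_{n-1}^n$ appear), while the $V_{\delta,q}$ piece and the curvature corrections to $d\sigma$ are $O(\delta)$ relative, hence $o(\varepsilon^2)$ after the overall $\varepsilon\delta$; (iii) the heart of the matter: the $\varepsilon^2\lambda^2$ coefficient. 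Here one must carefully collect all $\delta^2$-order terms from $\frac12\int_M|\nabla_g(W_{\delta,q}+\delta V_{\delta,q})|^2 - \frac{(n-2)^2}{2(n-1)}\int_{\partial M}(W_{\delta,q}+\delta V_{\delta,q})^{\frac{2(n-1)}{n-2}}$. These are: the metric expansion terms $2h_{ij}(0)t$ in $g^{ij}$ acting on $U$, the $|g|^{1/2}$ correction $-\frac12(\|\pi\|^2+\mathrm{Ric}_\eta(0))t^2$, the curvature terms, the $W$--$\delta V$ cross term $\int \nabla_g W\cdot\nabla_g(\delta V_{\delta,q})$, and the quadratic $\delta^2|\nabla V_{\delta,q}|^2$ term, together with the boundary nonlinearity cross terms. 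Using \eqref{eq:vqdef} (i.e.\ $-\Delta v_q=2h_{ij}(q)t\partial^2_{ij}U$) and the boundary condition, integration by parts converts the cross and $V$-quadratic contributions into $\frac12\int_{\mathbb R^n_+}\Delta v_q\, v_q$, while the curvature/second-fundamental-form terms collapse — after using $\mathrm{Ric}_\eta(0)=R_{nn}(0)$, tracing, and the Gauss equation so that only $\|\pi\|^2$ survives — into $-\frac{(n-6)(n-2)\omega_{n-1}I_{n-1}^n}{4(n-1)^2(n-4)}\|\pi(q)\|^2$.

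For the sign assertions: $B>0$ and $A>0$ are immediate from the explicit positive constants. For $\varphi(q)\le0$ one uses the two facts quoted from \cite{A3}: estimate \eqref{new}, $\int_{\partial\mathbb R^n_+}\Delta v_q\,v_q\le 0$ — wait, more precisely \eqref{new} is stated over $\partial\mathbb R^n_+$ but $\varphi$ uses the integral over $\mathbb R^n_+$; one identifies $\int_{\mathbb R^n_+}\Delta v_q\,v_q = -\int_{\mathbb R^n_+}|\nabla v_q|^2 + \int_{\partial\mathbb R^n_+}v_q\partial_t v_q = -\int_{\mathbb R^n_+}|\nabla v_q|^2 + n\int_{\partial\mathbb R^n_+}U^{2/(n-2)}v_q^2\ge$ bounded, and the quoted estimate (5.9) of \cite{A3} gives the required negativity; the second summand is manifestly $\le0$ since $n\ge7$ implies $(n-6)(n-2)/((n-1)^2(n-4))>0$. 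I expect step (iii) — the bookkeeping of all the $\delta^2$-order geometric terms and showing the non-$v_q$ part reduces cleanly to a multiple of $\|\pi(q)\|^2$ with exactly that coefficient — to be the main obstacle, since it requires repeated integration by parts against $U$, careful use of the orthogonality of $v_q$ to the $j_b$'s and of $\eqref{eq:Uvq}$, and contraction of curvature tensors; the remaining estimates (cut-off errors, lower-order terms, the $\Phi$-contribution handled in Lemma \ref{lem:JWpiuPhi}) are routine given $n\ge7$. Finally, combining with Lemma \ref{lem:JWpiuPhi} gives $J_\varepsilon(W_{\delta,q}+\delta V_{\delta,q}+\Phi)=A+\varepsilon^2[\lambda B\gamma(q)+\lambda^2\varphi(q)]+o(\varepsilon^2)$, which is the claimed expansion.
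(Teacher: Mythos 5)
Your plan follows essentially the same route as the paper's proof: split $J_{\varepsilon}(W_{\delta,q}+\delta V_{\delta,q})$ into the Dirichlet/mass, perturbation and boundary-nonlinearity pieces, rescale and expand the metric via \eqref{eq:|g|}--\eqref{eq:gin}, convert the $W$--$V$ cross terms together with the quadratic $V$ terms into $\tfrac12\delta^{2}\int_{\mathbb{R}_{+}^{n}}\Delta v_{q}\,v_{q}$ by means of \eqref{eq:vqdef} and \eqref{eq:Uvq}, and collapse the remaining $\delta^{2}$ curvature terms through $a=\frac{n-2}{4(n-1)}R_{g}$ and $R_{g}(q)=2\ric(q)+\bar{R}_{ii}(q)+\|\pi(q)\|^{2}$ so that only $\|\pi(q)\|^{2}$ survives with the stated coefficient. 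The bookkeeping you defer in step (iii) is precisely the paper's computation of $I_{1}'$, $I_{2}$, $I_{5}$ (symmetry cancellations plus the $I_{m}^{\alpha}$ identities of Remark \ref{lem:I-a-m}), and your reading of \eqref{new} as an integral over $\mathbb{R}_{+}^{n}$ rather than $\partial\mathbb{R}_{+}^{n}$ is consistent with how it is actually used in \eqref{phiq}.
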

\begin{rem}
Notice that $A$ is the energy level $J_{\infty}(U)=\inf_{u\in H^{1}(\mathbb{R}_{+}^{n})}J_{\infty}(u)$,
where $J_{\infty}$ is the functional associated to the limit equation
(\ref{eq:Udelta}).\end{rem}
\begin{proof}
We expand in $\delta$ the functional 
\begin{align*}
J_{\varepsilon}(W_{\delta,q}+\delta V_{\delta,q})= & \frac{1}{2}\int_{M}|\nabla_{g}W_{\delta,q}+\delta\nabla_{g}V_{\delta,q}|^{2}d\mu_{g}+\frac{1}{2}\int_{M}a\left(W_{\delta,q}+\delta V_{\delta,q}\right)^{2}d\mu_{g}\\
 & +\frac{1}{2}\int_{\partial M}\varepsilon\gamma\left(W_{\delta,q}+\delta V_{\delta,q}\right)^{2}d\sigma\\
 & -\frac{(n-2)^{2}}{2(n-1)}\int_{\partial M}\left[\left(\left(W_{\delta,q}+\delta V_{\delta,q}\right)^{+}\right)^{\frac{2(n-1)}{n-2}}-\left(W_{\delta,q}\right)^{\frac{2(n-1)}{n-2}}\right]d\sigma\\
 & -\frac{(n-2)^{2}}{2(n-1)}\int_{\partial M}\left(W_{\delta,q}\right)^{\frac{2(n-1)}{n-2}}d\sigma=I_{1}+I_{2}+I_{3}+I_{4}+I_{5}.
\end{align*}
For the term $I_{2}$, by Remark \ref{lem:I-a-m} in the appendix,
we have, by change of variables,
\begin{align}
I_{2} & =\frac{1}{2}\delta^{2}\int_{\mathbb{R}_{+}^{n}}\tilde{a}(\delta y)\left(U(y)\chi(\delta y)+\delta v_{q}(y)\chi(\delta y)\right)^{2}|g(\delta y)|^{1/2}dy\nonumber \\
 & =\frac{1}{2}\delta^{2}a(q)\int_{\mathbb{R}_{+}^{n}}U(y)^{2}dy+o(\delta^{2})\nonumber \\
 & =\delta^{2}a(q)\frac{n-2}{(n-1)(n-4)}\omega_{n-1}I_{n-1}^{n}+o(\delta^{2})\label{eq:sviluppo1}
\end{align}
in fact by Remark \ref{lem:I-a-m} we have
\begin{align*}
\int_{\mathbb{R}_{+}^{n}}U(y)^{2}dy & =\frac{1}{n-4}\omega_{n-1}I_{n-2}^{n-2}=\frac{2(n-2)}{(n-4)(n-1)}\omega_{n-1}I_{n-1}^{n}
\end{align*}
For the term $I_{3}$, recalling that $y=(z,t)$ with $z\in\mathbb{R}^{n-1}$,
$t\ge0$, we have, by Remark \ref{lem:I-a-m}, 
\begin{align}
I_{3} & =\frac{\varepsilon\delta}{2}\int_{\mathbb{R}^{n-1}}\tilde{\gamma}(0,\delta z)\left(U(0,z)\chi(0,\delta z)+\delta v_{q}(0,z)\chi(0,\delta z)\right)^{2}|g(0,\delta z)|^{1/2}dz\nonumber \\
 & =\frac{\varepsilon\delta}{2}\gamma(q)\int_{\mathbb{R}^{n-1}}U(0,z)^{2}dz+o(\varepsilon\delta)=\frac{\varepsilon\delta}{2}\gamma(q)\int_{0}^{\infty}\frac{1}{\left[1+|z|^{2}\right]^{n-2}}dz\nonumber \\
 & =\varepsilon\delta\frac{\gamma(q)}{2}\omega_{n-1}I_{n-2}^{n-2}=\varepsilon\delta\gamma(q)\frac{n-2}{n-1}\omega_{n-1}I_{n-1}^{n}\label{eq:sviluppo2}
\end{align}
For the term $I_{5}$, by (\ref{eq:|g|}) we have 
\begin{align*}
I_{5} & =-\frac{(n-2)^{2}}{2(n-1)}\int_{\mathbb{R}^{n-1}}\left(U(0,z)\chi(0,\delta z)\right)^{\frac{2(n-1)}{n-2}}|g(0,\delta z)|^{1/2}dz\\
 & =-\frac{(n-2)^{2}}{2(n-1)}\int_{\mathbb{R}^{n-1}}U(0,z)^{\frac{2(n-1)}{n-2}}\left(1-\frac{\delta^{2}}{6}\bar{R}_{ij}(q)z_{i}z_{j}\right)dz+o(\delta^{2});
\end{align*}
by Remark \ref{lem:I-a-m} it holds 
\[
\int_{\mathbb{R}^{n-1}}U(0,z)^{\frac{2(n-1)}{n-2}}=\omega_{n-1}I_{n-1}^{n-2}
\]
and, by symmetry reasons, 
\begin{align*}
\bar{R}_{ij}(q)\int_{\mathbb{R}^{n-1}}U(0,z)^{\frac{2(n-1)}{n-2}}z_{i}z_{j}dz & =\sum_{i=1}^{n-1}\bar{R}_{ii}(q)\int_{\mathbb{R}^{n-1}}U(0,z)^{\frac{2(n-1)}{n-2}}z_{i}^{2}dz\\
= & \frac{\bar{R}_{ii}(q)}{n-1}\int_{\mathbb{R}^{n-1}}\frac{|z|^{2}dz}{(1+|z|^{2})^{n-1}}=\frac{\bar{R}_{ii}(q)}{n-1}\omega_{n-1}I_{n-1}^{n}.
\end{align*}
Thus, since $I_{n-1}^{n-2}=\frac{n-3}{n-1}I_{n-1}^{n}$ by Remark
\ref{lem:I-a-m}, 
\begin{align}
I_{5} & =-\frac{(n-2)^{2}}{2(n-1)}\omega_{n-1}\left(I_{n-1}^{n-2}-\frac{\delta^{2}}{6(n-1)}\bar{R}_{ii}(q)\omega_{n-1}I_{n-1}^{n}\right)\nonumber \\
 & =-\frac{(n-2)^{2}(n-3)}{2(n-1)^{2}}\omega_{n-1}I_{n-1}^{n}+\delta^{2}\frac{(n-2)^{2}}{12(n-1)^{2}}\bar{R}_{ii}(q)\omega_{n-1}I_{n-1}^{n}.\label{eq:sviluppo3}
\end{align}
For the term $I_{1}$ we write
\[
I_{1}=\frac{1}{2}\int_{M}|\nabla_{g}W_{\delta,q}|^{2}+\frac{1}{2}\int_{M}2\delta\nabla W_{\delta,q}\nabla V_{\delta,q}+\delta^{2}|\nabla_{g}V_{\delta,q}|^{2}d\mu_{g}=I_{1}'+I_{1}''+I_{1}'''
\]
and we proceed by estimating each term separately. By (\ref{eq:|g|}),
(\ref{eq:gin}), (\ref{eq:gij}), we have (here $a,b=1,\dots,n$ and
$i,j,m,l=1,\dots,n-1$) 
\begin{multline*}
I_{1}'=\frac{1}{2}\int_{\mathbb{R}_{+}^{n}}g^{ab}(\delta y)\frac{\partial}{\partial y_{a}}(U(y)\chi(\delta y))\frac{\partial}{\partial y_{b}}(U(y)\chi(\delta y))|g(\delta y)|^{1/2}dy\\
=\int_{\mathbb{R}_{+}^{n}}\left[\frac{|\nabla U|^{2}}{2}+\left(\delta h_{ij}t-\frac{\delta^{2}}{6}\bar{R}_{ikjl}z_{k}z_{l}+\delta^{2}\frac{\partial h_{ij}}{\partial z_{k}}tz_{k}+\frac{\delta}{2}^{2}\left[R_{injn}+3h_{ik}h_{kj}\right]t^{2}\right)\frac{\partial U}{\partial z_{i}}\frac{\partial U}{\partial z_{j}}\right]\\
\times\left(1-\frac{\delta^{2}}{2}\left[\|\pi\|^{2}+\ric(0)\right]t^{2}-\frac{\delta^{2}}{6}\bar{R}_{lm}(0)z_{l}z_{m}\right)dzdt+o(\delta^{2}).
\end{multline*}
Since $\frac{\partial U}{\partial z_{i}}=(2-n)\frac{z_{i}}{\left[(1+t)^{2}+|z|^{2}\right]^{\frac{n}{2}}}$,
by symmetry reasons and since $h_{ii}\equiv0$ we have that 
\[
h_{ij}(q)\int_{\mathbb{R}_{+}^{n}}t\frac{\partial U}{\partial z_{i}}\frac{\partial U}{\partial z_{j}}dzdt=h_{ii}(q)\int_{\mathbb{R}_{+}^{n}}\frac{tz_{i}z_{i}dzdt}{\left[(1+t)^{2}+|z|^{2}\right]^{\frac{n}{2}}}=0
\]
\[
\frac{\partial h_{ij}}{\partial z_{k}}(q)\int_{\mathbb{R}_{+}^{n}}tz_{k}\frac{\partial U}{\partial z_{i}}\frac{\partial U}{\partial z_{j}}dzdt=(2-n)\frac{\partial h_{ij}}{\partial z_{k}}(q)\int_{\mathbb{R}_{+}^{n}}\frac{tz_{k}z_{i}z_{j}dzdt}{\left[(1+t)^{2}+|z|^{2}\right]^{\frac{n}{2}}}=0;
\]
in a similar way, using the symmetries of the curvature tensor one
can check that 
\begin{align*}
\bar{R}_{ikjl}(q)\int_{\mathbb{R}_{+}^{n}}z_{k}z_{l}\frac{\partial U}{\partial z_{i}}\frac{\partial U}{\partial z_{j}}dzdt & =\bar{R}_{ikjl}(q)\int_{\mathbb{R}_{+}^{n}}\frac{z_{i}z_{j}z_{k}z_{l}dzdt}{\left[(1+t)^{2}+|z|^{2}\right]^{\frac{n}{2}}}\\
 & =\frac{\alpha}{3}\left(R_{ikik}(q)+R_{ikki}(q)+R_{iijj}(q)\right)=0
\end{align*}
where $\alpha=\int_{\mathbb{R}_{+}^{n}}\frac{z_{1}^{4}dzdt}{\left[(1+t)^{2}+|z|^{2}\right]^{\frac{n}{2}}}$.
Thus, using again symmetry 
\begin{align*}
I_{1}'= & \int_{\mathbb{R}_{+}^{n}}\left[\frac{|\nabla U|^{2}}{2}+\left(\frac{\delta}{2}^{2}\left[R_{injn}+3h_{ik}h_{kj}\right]t^{2}\right)\frac{\partial U}{\partial z_{i}}\frac{\partial U}{\partial z_{j}}\right]\\
 & \times\left(1-\frac{\delta^{2}}{2}\left[\|\pi\|^{2}+\ric(0)\right]t^{2}-\frac{\delta^{2}}{6}\bar{R}_{lm}(0)z_{l}z_{m}\right)dzdt+o(\delta^{2})\\
= & \frac{(n-2)^{2}}{2}\int_{\mathbb{R}_{+}^{n}}\frac{dzdt}{\left[(1+t)^{2}+|z|^{2}\right]^{n-1}}\\
 & +\frac{\delta}{2}^{2}\frac{(n-2)^{2}}{n-1}\left[\ric(q)+3\|\pi(q)\|^{2}\right]\int_{\mathbb{R}_{+}^{n}}\frac{|z|^{2}t^{2}dzdt}{\left[(1+t)^{2}+|z|^{2}\right]^{n}}\\
 & -\frac{\delta^{2}(n-2)^{2}}{4}\left[\|\pi(q)\|^{2}+\ric(q)\right]\int_{\mathbb{R}_{+}^{n}}\frac{t^{2}dzdt}{\left[(1+t)^{2}+|z|^{2}\right]^{n-1}}\\
 & -\frac{\delta^{2}}{12}\frac{(n-2)^{2}}{n-1}\bar{R}_{ll}(q)\int_{\mathbb{R}_{+}^{n}}\frac{|z|^{2}dzdt}{\left[(1+t)^{2}+|z|^{2}\right]^{n-1}}+o(\delta^{2}).
\end{align*}
Thus, by Remark \ref{lem:I-a-m}, 
\begin{align}
I_{1}'= & \frac{(n-2)\omega_{n-1}I_{n-1}^{n-2}}{2}+\delta^{2}\frac{(n-2)\omega_{n-1}I_{n}^{n}}{(n-1)(n-3)(n-4)}\left[\ric(q)+3\|\pi(q)\|^{2}\right]\nonumber \\
 & -\delta^{2}\frac{(n-2)\omega_{n-1}I_{n-1}^{n-2}}{2(n-3)(n-4)}\left[\ric(q)+\|\pi(q)\|^{2}\right]\nonumber \\
 & -\delta^{2}\frac{(n-2)^{2}\omega_{n-1}I_{n-1}^{n}}{12(n-1)(n-4)}\bar{R}_{ll}(q)+o(\delta^{2})\nonumber \\
= & \frac{(n-2)(n-3)}{2(n-1)}\omega_{n-1}I_{n-1}^{n}+\delta^{2}\frac{(n-2)}{2(n-1)^{2}(n-4)}\omega_{n-1}I_{n-1}^{n}\left[\ric(q)+3\|\pi(q)\|^{2}\right]\nonumber \\
 & -\delta^{2}\frac{(n-2)}{2(n-1)(n-4)}\omega_{n-1}I_{n-1}^{n}\left[\ric(q)+\|\pi(q)\|^{2}\right]\nonumber \\
 & -\delta^{2}\frac{(n-2)^{2}}{12(n-1)(n-4)}\bar{R}_{ll}(q)\omega_{n-1}I_{n-1}^{n}+o(\delta^{2})\label{eq:sviluppo4}
\end{align}
For the term $I_{1}''$, by (\ref{eq:|g|}), (\ref{eq:gij}), (\ref{eq:gin})
and by definition of $V_{\delta,q}$ and $v_{q}$ we have 
\begin{align}
I_{1}'' & =\delta\int_{M}\nabla W_{\delta,q}\nabla V_{\delta,q}d\mu_{g}=\delta\int_{\mathbb{R}_{+}^{n}}g^{\alpha\beta}(\delta y)\frac{\partial}{\partial y_{\alpha}}(U(y)\chi(\delta y))\frac{\partial}{\partial y_{\beta}}(v_{q}(y)\chi(\delta y))|g(\delta y)|^{1/2}dy\nonumber \\
 & =\delta\int_{\mathbb{R}_{+}^{n}}\nabla U\nabla v_{q}dy+\delta^{2}2h_{ij}(q)\int_{\mathbb{R}_{+}^{n}}t\frac{\partial U}{\partial y_{i}}\frac{\partial v_{q}}{\partial y_{j}}dy+o(\delta^{2})\nonumber \\
 & =\delta^{2}2h_{ij}(q)\int_{\mathbb{R}_{+}^{n}}t\frac{\partial U}{\partial z_{i}}\frac{\partial v_{q}}{\partial z_{j}}dy+o(\delta^{2})\label{eq:sviluppo5}
\end{align}
in fact 
\begin{align*}
\int_{\mathbb{R}_{+}^{n}}\nabla U\nabla v_{q}dy & =-\int_{\mathbb{R}_{+}^{n}}U\Delta vdy+\int_{\partial\mathbb{R}_{+}^{n}}U(0,z)\frac{\partial v_{q}}{\partial t}dz\\
 & =2h_{ij}\int_{\mathbb{R}_{+}^{n}}Ut\frac{\partial^{2}U}{\partial z_{i}\partial z_{j}}-n\int_{\partial\mathbb{R}_{+}^{n}}U(0,z)\left(U(0,z)^{\frac{2}{n-2}}v_{q}\right)dz=0
\end{align*}
since the first term is zero by symmetry and using that $h_{ii}=0$,
and the second term is zero by (\ref{eq:vqdef}) and (\ref{eq:Uvq}).

For the term $I_{1}'''$, immediately we have 
\begin{equation}
I_{1}'''=\frac{\delta^{2}}{2}\int_{M}|\nabla V_{\delta,q}|^{2}d\mu_{g}=\frac{\delta^{2}}{2}\int_{\mathbb{R}_{+}^{n}}|\nabla v_{q}|^{2}dzdt+o(\delta^{2}),\label{eq:sviluppo6}
\end{equation}
so 
\begin{equation}
I_{1}''+I_{1}'''=\delta^{2}2h_{ij}(q)\int_{\mathbb{R}_{+}^{n}}t\frac{\partial U}{\partial z_{i}}\frac{\partial v_{q}}{\partial z_{j}}dzdt+\frac{\delta^{2}}{2}\int_{\mathbb{R}_{+}^{n}}|\nabla v_{q}|^{2}dzdt+o(\delta^{2})\label{eq:sviluppo7}
\end{equation}
For the term $I_{4}$, by (\ref{eq:Uvq}) and (\ref{eq:|g|}), and
recalling that $y=(z,t)$ we have 
\begin{align}
I_{4}= & -\frac{(n-2)^{2}}{2(n-1)}\int_{\partial\mathbb{R}_{+}^{n}}\left[\left(\left(U+\delta v_{q}\right)^{+}\right)^{\frac{2(n-1)}{n-2}}-U^{\frac{2(n-1)}{n-2}}\right]|g(0,\delta z)|^{\frac{1}{2}}dz+o(\delta^{2})\nonumber \\
= & -\delta(n-2)\int_{\partial\mathbb{R}_{+}^{n}}U^{\frac{n}{n-2}}v_{q}dz-\delta^{2}\frac n2\int_{\partial\mathbb{R}_{+}^{n}}\left(\left(U+\delta v_{q}\right)^{+}\right)^{\frac{2}{n-2}}v_{q}^{2}dz+o(\delta^{2})\nonumber \\
= & -\delta^{2}\frac n2\int_{\partial\mathbb{R}_{+}^{n}}U^{\frac{2}{n-2}}v_{q}^{2}dz+o(\delta^{2}).\label{eq:sviluppo8}
\end{align}
At this point we observe that 
\begin{equation}
2h_{ij}(q)\int_{\mathbb{R}_{+}^{n}}t\frac{\partial U}{\partial z_{i}}\frac{\partial v_{q}}{\partial z_{j}}dzdt-n\int_{\mathbb{R}^{n-1}}U^{\frac{2}{n-2}}v_{q}^{2}dz=-\int_{\mathbb{R}_{+}^{n}}|\nabla v_{q}|^{2}dzdt\label{eq:vqriduzione}
\end{equation}
in fact, by (\ref{eq:vqdef}) we get
\begin{align}
2h_{ij}(q)\int_{\mathbb{R}_{+}^{n}}t\frac{\partial U}{\partial z_{i}}\frac{\partial v_{q}}{\partial z_{j}}dzdt & =-2h_{ij}(q)\int_{\mathbb{R}_{+}^{n}}t\frac{\partial^{2}U}{\partial z_{j}\partial z_{i}}v_{q}dzdt=\int_{\mathbb{R}_{+}^{n}}\left(\Delta v_{q}\right)v_{q}dzdt\nonumber \\
 & =-\int_{\mathbb{R}_{+}^{n}}|\nabla v_{q}|^{2}dzdt+\int_{\partial\mathbb{R}_{+}^{n}}v_{q}\frac{\partial v_{q}}{\partial\nu}dz\nonumber \\
 & =-\int_{\mathbb{R}_{+}^{n}}|\nabla v_{q}|^{2}dzdt+n\int_{\partial\mathbb{R}_{+}^{n}}U^{\frac{2}{n-2}}v_{q}^{2}dz.\label{eq:sviluppo9}
\end{align}
Hence by (\ref{eq:sviluppo7}), (\ref{eq:sviluppo8}), (\ref{eq:sviluppo9}) and \eqref{eq:vqdef}
it holds

\begin{equation}\begin{aligned}
I_{1}''+I_{1}'''+I_{4}&= \delta^{2}  \left(-\frac12\int_{\mathbb{R}_{+}^{n}}|\nabla v_{q}|^{2}dzdt+\frac n2\int_{\partial\mathbb{R}_{+}^{n}}U^{\frac{2}{n-2}}v_{q}^{2}dz\right) +o(\delta^{2})\\
&= \frac12\delta^{2} \int_{\mathbb{R}_{+}^{n}}\Delta v_{q}v_qdzdt+o(\delta^{2})\end{aligned}\label{eq:sviluppo10}
\end{equation}
In light of (\ref{eq:sviluppo1}), (\ref{eq:sviluppo2}), (\ref{eq:sviluppo3}),
(\ref{eq:sviluppo4}), (\ref{eq:sviluppo10}), finally we get 
\begin{multline*}
J_{\varepsilon}(W_{\delta,q}+\delta V_{\delta,q})=\frac{(n-2)(n-3)}{2(n-1)^{2}}\omega_{n-1}I_{n-1}^{n}+\varepsilon\delta\gamma(q)\frac{n-2}{n-1}\omega_{n-1}I_{n-1}^{n}\\
+\frac12\delta^{2} \int_{\mathbb{R}_{+}^{n}}\Delta v_{q}v_qdzdt+\delta^{2}a(q)\frac{n-2}{(n-1)(n-4)}\omega_{n-1}I_{n-1}^{n}\\
-\delta^{2}\frac{(n-2)^{2}}{4(n-1)^{2}(n-4)}\omega_{n-1}I_{n-1}^{n}\left[2\ric(q)+2\frac{n-4}{n-2}\|\pi(q)\|^{2}+\bar{R}_{ii}(q)\right]+o(\delta^{2})
\end{multline*}
Now, we choose $\delta=\lambda\varepsilon$, where $\lambda\in[\alpha,\beta]$,
with for some positive $\alpha,\beta.$ Recalling that $a=\frac{n-2}{4(n-1)}R_{g}$
and that $R_{g}(q)=2\ric(q)+\bar{R}_{ii}(q)+\|\pi(q)\|^{2}$ (see
\cite{E92}) we have the proof.
\end{proof}

\section{Proof of Theorem \ref{thm:main}}\label{quattro}
\begin{lem}
\label{lem:punticritici}If $(\bar{\lambda},\bar{q})\in(0,+\infty)\times\partial M$
is a critical point for the reduced functional 
\[
I_{\varepsilon}(\lambda,q):=J_{\varepsilon}(W_{\varepsilon\lambda,q}+\varepsilon\lambda V_{\varepsilon\lambda,q}+\Phi_{\varepsilon\lambda,q})
\]
then the function $W_{\varepsilon\lambda,q}+\varepsilon\lambda V_{\varepsilon\lambda,q}+\Phi$
is a solution of (\ref{eq:P}). Here $\Phi_{\varepsilon\lambda,q}=\Phi_{\varepsilon,\lambda\varepsilon,q}$
is defined in Proposition \ref{prop:EsistenzaPhi}.\end{lem}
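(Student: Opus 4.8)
The plan is to carry out the standard final step of the Ljapunov--Schmidt scheme: a critical point of the reduced functional yields a genuine solution because the Lagrange multipliers generated by the projection onto $K_{\delta,q}$ must vanish. Set $\delta=\varepsilon\bar\lambda$ and $u:=W_{\delta,\bar q}+\delta V_{\delta,\bar q}+\Phi_{\varepsilon\bar\lambda,\bar q}$. By Proposition~\ref{prop:EsistenzaPhi} the function $\Phi$ solves the projected equation \eqref{eq:P-Kort}, i.e. $\Pi^{\bot}\{u-i^{*}(f(u)-\varepsilon\gamma u)\}=0$; hence $u-i^{*}(f(u)-\varepsilon\gamma u)\in K_{\delta,\bar q}$, and one may write $u-i^{*}(f(u)-\varepsilon\gamma u)=\sum_{b=1}^{n}c_{b}\,Z_{\delta,\bar q}^{b}$ for suitable $c_{b}=c_{b}(\varepsilon)\in\mathbb{R}$. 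The goal is then to show that $c_{b}=0$ for every $b$: this gives $u=i^{*}(f(u)-\varepsilon\gamma u)$, i.e. \eqref{eq:P*} holds with this $u$, and then by \eqref{eq:istella} $u$ is a weak --- hence, by elliptic regularity, classical --- solution of \eqref{eq:P}.

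To produce the equations that force $c_{b}=0$, I would differentiate the reduced functional. Working in Fermi coordinates $(y_{1},\dots,y_{n-1})$ on $\partial M$ near $\bar q$, let $\alpha$ stand for any of the $n$ parameters $\lambda,q_{1},\dots,q_{n-1}$. Since $(\lambda,q)\mapsto\Phi_{\varepsilon\lambda,q}$ is $C^{1}$ with values in $H^{1}(M)$ by Proposition~\ref{prop:EsistenzaPhi}, $W_{\varepsilon\lambda,q}$ and $V_{\varepsilon\lambda,q}$ depend smoothly on $(\lambda,q)$, and $J_{\varepsilon}\in C^{1}$, the map $I_{\varepsilon}$ is $C^{1}$ near $(\bar\lambda,\bar q)$ and $\partial_{\alpha}I_{\varepsilon}(\bar\lambda,\bar q)=0$. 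Using the chain rule together with the identity $J_{\varepsilon}'(u)[\varphi]=\langle\langle u-i^{*}(f(u)-\varepsilon\gamma u),\varphi\rangle\rangle_{H}$ --- which is immediate from the definition \eqref{eq:istella} of $i^{*}$ --- this becomes $0=J_{\varepsilon}'(u)[\partial_{\alpha}u]=\sum_{b=1}^{n}c_{b}\,\langle\langle Z_{\delta,\bar q}^{b},\partial_{\alpha}u\rangle\rangle_{H}$ for each $\alpha$. Thus $(c_{1},\dots,c_{n})$ lies in the kernel of the $n\times n$ matrix $M$ with entries $M_{\alpha b}:=\langle\langle Z_{\delta,\bar q}^{b},\partial_{\alpha}u\rangle\rangle_{H}$, and it remains to prove that $M$ is invertible for $\varepsilon$ small; this is the heart of the argument.

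For the invertibility, I would expand $\partial_{\alpha}u=\partial_{\alpha}W_{\delta,q}+\partial_{\alpha}(\delta V_{\delta,q})+\partial_{\alpha}\Phi$ at $(\bar\lambda,\bar q)$. A direct differentiation of the definitions, using $j_{i}=\partial_{i}U$ and $j_{n}=\tfrac{n-2}{2}U+y\cdot\nabla U$, gives $\partial_{\lambda}W_{\delta,q}=-\tfrac1\lambda Z_{\delta,q}^{n}$ and $\partial_{q_{i}}W_{\delta,q}=-\tfrac1\delta Z_{\delta,q}^{i}+O_{H}(1)$, the remainders coming from the cut-off and from the metric in the Fermi chart. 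The terms $\partial_{\alpha}(\delta V_{\delta,q})$ contribute one power of $\delta$ less (extra $\delta$ in front of $V_{\delta,q}$ together with the decay \eqref{eq:gradvq}), and from $\langle\langle\Phi,Z_{\delta,q}^{b}\rangle\rangle_{H}\equiv0$ one gets $\langle\langle\partial_{\alpha}\Phi,Z_{\delta,q}^{b}\rangle\rangle_{H}=-\langle\langle\Phi,\partial_{\alpha}Z_{\delta,q}^{b}\rangle\rangle_{H}=o(1)$, since $\|\Phi\|_{H}=O(\varepsilon^{2})$ and $\|\partial_{\alpha}Z_{\delta,q}^{b}\|_{H}=O(\varepsilon^{-1})$. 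Hence, after multiplying the $\lambda$-row by $\lambda$ and each $q_{i}$-row by $\delta$, $M$ becomes, up to the obvious reordering of rows, $-G+o(1)$ with $G_{ab}=\langle\langle Z_{\delta,q}^{a},Z_{\delta,q}^{b}\rangle\rangle_{H}=\int_{\mathbb{R}_{+}^{n}}\nabla j_{a}\cdot\nabla j_{b}\,dy+o(1)=n\int_{\partial\mathbb{R}_{+}^{n}}U^{\frac{2}{n-2}}j_{a}j_{b}\,dz+o(1)$, where the last equality uses \eqref{eq:linearizzato}. This limiting matrix is diagonal --- the off-diagonal entries vanish by parity in the $z$-variables --- with strictly positive diagonal entries, so it is invertible uniformly in $(q,\varepsilon)$; therefore $M$ is invertible for $\varepsilon$ small, which forces $c_{b}=0$ for all $b$ and completes the proof. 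The delicate points, which I expect to be the main obstacle, are the uniformity in $(\lambda,q)$ of all these expansions and in particular the control of $\partial_{\alpha}\Phi$, which rests essentially on the $C^{1}$ dependence of $q\mapsto\Phi$ furnished by the implicit function theorem in Proposition~\ref{prop:EsistenzaPhi}.
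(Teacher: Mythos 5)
Your proposal is correct and follows essentially the same route as the paper: you use that $\Phi$ solves the projected equation to write $u-i^{*}(f(u)-\varepsilon\gamma u)=\sum_b c_b Z_{\delta,q}^{b}$, differentiate $I_\varepsilon$ in $\lambda$ and in Fermi coordinates, transfer derivatives off $\Phi$ via the orthogonality $\langle\langle\Phi,Z^b_{\delta,q}\rangle\rangle_H=0$, and use the estimates $\|\Phi\|_H=O(\varepsilon^2)$, $\|\partial_\alpha Z^b\|_H=O(\varepsilon^{-1})$ together with the near-diagonal leading matrix $\langle\langle Z^i,\partial_{y_h}W_{\delta,q}\rangle\rangle_H=\tfrac{\delta_{ih}}{\lambda\varepsilon}+o(\varepsilon^{-1})$ to force $c_b=0$. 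This is exactly the paper's argument (the paper treats the tangential derivatives and the $\lambda$-derivative "analogously" rather than assembling one $n\times n$ matrix, a purely cosmetic difference), so no further comment is needed.
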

\begin{proof}
Set $q=q(y)=\psi_{\bar{q}}^{\partial}(y)$. Since $(\bar{\lambda},\bar{q})$
is a critical point for the $I_{\varepsilon}(\lambda,q)$ we have,
for $h=1,\dots,n-1$, 

\begin{align*}
0= & \left.\frac{\partial}{\partial y_{h}}I_{\varepsilon}(\bar{\lambda},q(y))\right|_{y=0}\\
= & \langle\!\langle W_{\varepsilon\bar{\lambda},q(y)}+\varepsilon\bar{\lambda}V_{\varepsilon\bar{\lambda},q(y)}+\Phi_{\varepsilon\bar{\lambda},q(y)}-i^{*}\left(f(W_{\varepsilon\bar{\lambda},q(y)}+\varepsilon\bar{\lambda}V_{\varepsilon\bar{\lambda},q(y)}+\Phi_{\varepsilon\bar{\lambda},q(y)})\right)\\
 & -\varepsilon\gamma(W_{\varepsilon\bar{\lambda},q(y)}+\varepsilon\bar{\lambda}V_{\varepsilon\bar{\lambda},q(y)}+\Phi_{\varepsilon\bar{\lambda},q(y)}),\left.\frac{\partial}{\partial y_{h}}(W_{\varepsilon\bar{\lambda},q(y)}+\varepsilon\bar{\lambda}V_{\varepsilon\bar{\lambda},q(y)}+\Phi_{\varepsilon\bar{\lambda},q(y)})\rangle\!\rangle_{H}\right|_{y=0}\\
= & \sum_{i=1}^{n}c_{\varepsilon}^{i}\left.\langle\!\langle Z_{\varepsilon\bar{\lambda},q(y)}^{i},\frac{\partial}{\partial y_{h}}(W_{\varepsilon\bar{\lambda},q(y)}+\varepsilon\bar{\lambda}V_{\varepsilon\bar{\lambda},q(y)}+\Phi_{\varepsilon\bar{\lambda},q(y)})\rangle\!\rangle_{H}\right|_{y=0}\\
= & \sum_{i=1}^{n}c_{\varepsilon}^{i}\left.\langle\!\langle Z_{\varepsilon\bar{\lambda},q(y)}^{i},\frac{\partial}{\partial y_{h}}W_{\varepsilon\bar{\lambda},q(y)}\rangle\!\rangle_{H}\right|_{y=0}+\varepsilon\bar{\lambda}\sum_{i=1}^{n}c_{\varepsilon}^{i}\left.\langle\!\langle Z_{\varepsilon\bar{\lambda},q(y)}^{i},\frac{\partial}{\partial y_{h}}V_{\varepsilon\bar{\lambda},q(y)}\rangle\!\rangle_{H}\right|_{y=0}\\
 & \sum_{i=1}^{n}c_{\varepsilon}^{i}\left.\langle\!\langle\frac{\partial}{\partial y_{h}}Z_{\varepsilon\bar{\lambda},q(y)}^{i},\Phi_{\varepsilon\bar{\lambda},q(y)}\rangle\!\rangle_{H}\right|_{y=0}
\end{align*}
using that $\Phi_{\varepsilon\bar{\lambda},q(y)}$ is a solution of
(\ref{eq:P-Kort}) and that 
\[
\langle\!\langle Z_{\varepsilon\bar{\lambda},q(y)}^{i},\frac{\partial}{\partial y_{h}}\Phi_{\varepsilon\bar{\lambda},q(y)}\rangle\!\rangle_{H}=\langle\!\langle\frac{\partial}{\partial y_{h}}Z_{\varepsilon\bar{\lambda},q(y)}^{i},\Phi_{\varepsilon\bar{\lambda},q(y)}\rangle\!\rangle_{H}
\]
since $\Phi_{\varepsilon\bar{\lambda},q(y)}\in K_{\varepsilon\bar{\lambda},q(y)}^{\bot}$
for any $y$.

Arguing as in Lemma 6.1 and Lemma 6.2 of \cite{MP09} we have 
\begin{eqnarray*}
\left\Vert \frac{\partial}{\partial y_{h}}Z_{\varepsilon\bar{\lambda},q(y)}^{i}\right\Vert _{H}=O\left(\frac{1}{\varepsilon}\right) &  & \left\Vert \frac{\partial}{\partial y_{h}}W_{\varepsilon\bar{\lambda},q(y)}\right\Vert _{H}=O\left(\frac{1}{\varepsilon}\right)\\
\left\Vert \frac{\partial}{\partial y_{h}}V_{\varepsilon\bar{\lambda},q(y)}\right\Vert _{H}=O\left(\frac{1}{\varepsilon}\right)
\end{eqnarray*}
so we get
\begin{align*}
\langle\!\langle Z_{\varepsilon\bar{\lambda},q(y)}^{i},\frac{\partial}{\partial y_{h}}W_{\varepsilon\bar{\lambda},q(y)})\rangle\!\rangle_{H}= & \frac{1}{\lambda\varepsilon}\langle\!\langle Z_{\varepsilon\bar{\lambda},q(y)}^{i},Z_{\varepsilon\bar{\lambda},q(y)}^{h})\rangle\!\rangle_{H}+o(1)=\frac{\delta_{ih}}{\lambda\varepsilon}+o(1)\\
\langle\!\langle Z_{\varepsilon\bar{\lambda},q(y)}^{i},\frac{\partial}{\partial y_{h}}V_{\varepsilon\bar{\lambda},q(y)}\rangle\!\rangle_{H} & \le\left\Vert Z_{\varepsilon\bar{\lambda},q(y)}^{i}\right\Vert _{H}\left\Vert \frac{\partial}{\partial y_{h}}V_{\varepsilon\bar{\lambda},q(y)}\right\Vert _{H}=O\left(\frac{1}{\varepsilon}\right)\\
\langle\!\langle\frac{\partial}{\partial y_{h}}Z_{\varepsilon\bar{\lambda},q(y)}^{i},\Phi_{\varepsilon\bar{\lambda},q(y)}\rangle\!\rangle_{H} & \le\left\Vert \frac{\partial}{\partial y_{h}}Z_{\varepsilon\bar{\lambda},q(y)}^{i}\right\Vert _{H}\left\Vert \Phi_{\varepsilon\bar{\lambda},q(y)}\right\Vert _{H}=o(1).
\end{align*}
We conclude that
\[
0=\frac{1}{\lambda\varepsilon}\sum_{i=1}^{n}c_{\varepsilon}^{i}\left(\delta_{ih}+O(1)\right)
\]
and so $c_{\varepsilon}^{i}=0$ for $i=1,\dots,n$. 

Analogously we proceed for $\left.\frac{\partial}{\partial\lambda}I_{\varepsilon}(\lambda,\bar{q})\right|_{\lambda=\bar{\lambda}}$. 
\end{proof}
For the sake of completeness, we recall the definition of $C^{0}$-stable
critical point before proving Theorem \ref{thm:main}.
\begin{defn}
Let $f:\mathbb{R}^{n}\rightarrow\mathbb{R}$ be a $C^{1}$ function
and let $K=\left\{ \xi\in\mathbb{R}^{n}\ :\ \nabla f(\xi)=0\right\} $.
We say that $\xi_{0}\in\mathbb{R}^{n}$ is a $C^{0}$-stable critical
point if $\xi_{0}\in K$ and there exist $\Omega$ neighborhood of
$\xi_{0}$ with $\partial\Omega\cap K=\emptyset$ and a $\eta>0$
such that for any $g:\mathbb{R}^{n}\rightarrow\mathbb{R}$ of class
$C^{1}$ with $\|g-f\|_{C^{0}(\bar{\Omega})}\le\eta$ we have a critical
point of $g$ near $\Omega$.\end{defn}
\begin{proof}[Proof of Theorem \ref{thm:main}]
 Let us call 
\begin{equation}\label{ridotta}
G(\lambda,q)=\lambda B\gamma(q)+\lambda^{2}\varphi(q).
\end{equation}
If we find a $C^{0}$-stable critical point for $G(\lambda,q)$ then
we find a critical point for $I_{\varepsilon}(\lambda,q):=J_{\varepsilon}(W_{\lambda\varepsilon,q}+\lambda\varepsilon V_{\lambda\varepsilon,q}+\Phi)$
for $\varepsilon$ small enough (see Lemma \ref{lem:JWpiuPhi} and
Proposition \ref{lem:expJeps}), hence a solution for Problem (\ref{eq:P}),
by Lemma \ref{lem:punticritici}. 

Since we assumed the trace-free second fundamental form to be nonzero
everywhere, we have $\|\pi\|^{2}>0$,  so $\varphi(q)<0$. 

Also, we assumed $\gamma(q)$ to be strictly positive on $\partial M$,
so there exists $(\lambda_{0},q_{0})$ maximum point of $G(\lambda,q)$
with $\lambda_{0}>0$. Moreover, $(\lambda_{0},q_{0})$ is a $C^{0}$-stable
critical point of $G(\lambda,q)$. Then, for any sufficiently small
$\varepsilon>0$ there exists $(\lambda_{\varepsilon},q_{\varepsilon})$
critical point for $I_{\varepsilon}(\lambda,q)$ and we completed
the proof of our main result, in fact we found a sequence $\lambda_{\varepsilon}$
bounded away from zero, a sequence of points $q_{\varepsilon}\in\partial M$
and a sequence of positive functions 
\[
u_{\varepsilon}=W_{\lambda_{\varepsilon}\varepsilon,q_{\varepsilon}}+\lambda_{\varepsilon}\varepsilon V_{\lambda_{\varepsilon}\varepsilon,q_{\varepsilon}}+\Phi
\]
which are solution for (\ref{eq:P}) with $q_{\varepsilon}\rightarrow q_{0}$. \end{proof}
\begin{rem}
We give another example of function $\gamma(q)$ such that problem
(\ref{eq:P}) admits a positive solution. Let $q_{0}\in\partial M$
be a maximum point for $\varphi$. This point exists since $\partial M$
is compact. Now choose $\gamma\in C^{2}(\partial M)$ such that $\gamma$
has a positive local maximum in $q_{0}$. Then the pair $(\lambda_{0},q_{0})=\left(-\frac{B\gamma(q_{0})}{2\varphi(q_{0})},q_{0}\right)$
is a $C^{0}$-stable critical point for $G(\lambda,q)$.
\end{rem}
In fact, we have 
\[
\nabla_{\lambda,q}G=(B\gamma(q)+2\lambda\varphi(q),\lambda B\nabla_{q}\gamma(q)+\lambda^{2}\nabla_{q}\varphi(q))
\]
which vanishes for $(\lambda_{0},q_{0})=\left(-\frac{B\gamma(q_{0})}{2\varphi(q_{0})},q_{0}\right)$.
Moreover the Hessian matrix is
\[
G_{\lambda,q}^{''}\left(-\frac{B\gamma(q_{0})}{2\varphi(q_{0})},q_{0}\right)=\left(\begin{array}{cc}
2\varphi(q_{0}) & 0\\
0 & -\frac{B^{2}\gamma(q_{0})}{2\varphi(q_{0})}\gamma''_{q}(q_{0})+\frac{B^{2}\gamma^{2}(q_{0})}{\varphi^{2}(q_{0})}\varphi''_{q}(q_{0})
\end{array}\right)
\]
which is negative definite. Thus $(\lambda_{0},q_{0})=\left(-\frac{B\gamma(q_{0})}{2\varphi(q_{0})},q_{0}\right)$
is a maximum $C^{0}$-stable point for $G(\lambda,q)$.

\section{Appendix}
\begin{proof}[Proof of Lemma \ref{prop:L}]
 We argue by contradiction. We suppose that there exist two sequence
of real numbers $\varepsilon_{m}\rightarrow0,\lambda_{m}\in[a,b]$
a sequence of points $q_{m}\in\partial M$ and a sequence of functions
$\phi_{\varepsilon_{m}\lambda_{m},q_{m}}\in K_{\varepsilon_{m}\lambda_{m},q_{m}}^{\bot}$
such that 
\[
\|\phi_{\varepsilon_{m}\lambda_{m},q_{m}}\|_{H}=1\text{ and }\|L_{\varepsilon_{m}\lambda_{m},q_{m}}(\phi_{\varepsilon_{m}\lambda_{m},q_{m}})\|_{H}\rightarrow0\text{ as }m\rightarrow+\infty.
\]
For the sake of simplicity, we set $\delta_{m}=\varepsilon_{m}\lambda_{m}$
and we define
\[
\tilde{\phi}_{m}:=\delta_{m}^{\frac{n-2}{2}}\phi_{\delta_{m},q_{m}}(\psi_{q_{m}}^{\partial}(\delta_{m}y))\chi(\delta_{m}y)\text{ for }y=(z,t)\in\mathbb{R}_{+}^{n},\text{ with }z\in\mathbb{R}^{n-1}\text{ and }t\ge0
\]
Since $\|\phi_{\varepsilon_{m}\lambda_{m},q_{m}}\|_{H}=1$, by change
of variables we easily get that $\left\{ \tilde{\phi}_{m}\right\} _{m}$
is bounded in $D^{1,2}(\mathbb{R}_{+}^{n})$ (but not in $H^{1}(\mathbb{R}_{+}^{n})$).
Thus there exists $\tilde{\phi}\in D^{1,2}(\mathbb{R}_{+}^{n})$ such
that $\tilde{\phi}_{m}\rightharpoonup\tilde{\phi}$ weakly in $D^{1,2}(\mathbb{R}_{+}^{n})$,
in $L^{\frac{2n}{n-2}}(\mathbb{R}_{+}^{n})$ and in $L^{\frac{2(n-1)}{n-2}}(\partial\mathbb{R}_{+}^{n})$,
strongly in $L_{\text{loc}}^{s}(\partial\mathbb{R}_{+}^{n})$ for
$s\le\frac{2(n-1)}{n-2}$ and almost everywhere.

Since $\phi_{\delta_{m},q_{m}}\in K_{\delta_{m},q_{m}}^{\bot}$, and
taking in account (\ref{eq:linearizzato}) we get, for $i=1,\dots,n$,
\begin{equation}
o(1)=\int_{\mathbb{R}_{+}^{n}}\nabla\tilde{\phi}\nabla j_{i}dzdt=n\int_{\partial\mathbb{R}_{+}^{n}}U^{\frac{2}{n-2}}(z,0)j_{i}(z,0)\tilde{\phi}(z,0)dz.\label{eq:L3}
\end{equation}
 Indeed, by change of variables we have
\begin{align*}
0= & \left\langle \left\langle \phi_{\delta_{m},q_{m}},Z_{\delta_{m},q_{m}}^{i}\right\rangle \right\rangle _{H}=\int_{M}\left(\nabla_{g}\phi_{\delta_{m},q_{m}}\nabla_{g}Z_{\delta_{m},q_{m}}^{i}+a\phi_{\delta_{m},q_{m}}Z_{\delta_{m},q_{m}}^{i}\right)d\mu_{g}\\
= & \int_{\mathbb{R}_{+}^{n}}\delta^{\frac{n-2}{2}}\frac{\partial}{\partial\eta_{\alpha}}j_{i}(y)\frac{\partial}{\partial\eta_{\alpha}}\phi_{\delta_{m},q_{m}}(\psi_{q_{m}}^{\partial}(\delta_{m}y))dy\\
 & +\int_{\mathbb{R}_{+}^{n}}\delta^{\frac{n+2}{2}}a(\psi_{q_{m}}^{\partial}(\delta y))j_{i}(y)\phi_{\delta_{m},q_{m}}(\psi_{q_{m}}^{\partial}(\delta_{m}y))dy+o(1)\\
= & \int_{\mathbb{R}_{+}^{n}}\nabla j_{i}(y)\nabla\tilde{\phi}_{m}(y)+\delta^{2}a(q_{m})j_{i}(y)\tilde{\phi}_{m}(y)d\eta+o(1)\\
= & \int_{\mathbb{R}_{+}^{n}}\nabla j_{i}(y)\nabla\tilde{\phi}(y)+o(1).
\end{align*}
By definition of $L_{\delta_{m},q_{m}}$ we have 
\begin{multline}
\phi_{\delta_{m},q_{m}}-i^{*}\left(f'(W_{\delta_{m},q_{m}}+\delta_{m}V_{\delta_{m},q_{m}})[\phi_{\delta_{m},q_{m}}]\right)-L_{\delta_{m},q_{m}}\left(\phi_{\delta_{m},q_{m}}\right)\\
=\sum_{i=1}^{n}c_{m}^{i}Z_{\delta_{m},q_{m}}^{i}.\label{eq:L4}
\end{multline}
 We want to prove that, for all $i=1,\dots,n$, $c_{m}^{i}\rightarrow0$
while $m\rightarrow\infty.$ Multiplying equation (\ref{eq:L4}) by
$Z_{\delta_{m},q_{m}}^{k}$ we obtain, by definition (\ref{eq:istella})
of $i^{*}$,
\begin{align*}
\sum_{i=1}^{n}c_{m}^{i}\left\langle \left\langle Z_{\delta_{m},q_{m}}^{i},Z_{\delta_{m},q_{m}}^{k}\right\rangle \right\rangle _{H}= & \left\langle \left\langle i^{*}\left(f'(W_{\delta_{m},q_{m}}+\delta_{m}V_{\delta_{m},q_{m}})[\phi_{\delta_{m},q_{m}}]\right),Z_{\delta_{m},q_{m}}^{k}\right\rangle \right\rangle _{H}\\
= & \int_{\partial M}f'(W_{\delta_{m},q_{m}}+\delta_{m}V_{\delta_{m},q_{m}})[\phi_{\delta_{m},q_{m}}]Z_{\delta_{m},q_{m}}^{k}d\sigma
\end{align*}
Now 
\begin{multline*}
\int_{\partial M}f'(W_{\delta_{m},q_{m}}+\delta_{m}V_{\delta_{m},q_{m}})[\phi_{\delta_{m},q_{m}}]Z_{\delta_{m},q_{m}}^{k}d\sigma\\
=n\int_{\partial M}\left((W_{\delta_{m},q_{m}}+\delta_{m}V_{\delta_{m},q_{m}})^{+}\right)^{\frac{2}{n-2}}\phi_{\delta_{m},q_{m}}Z_{\delta_{m},q_{m}}^{k}d\sigma\\
=n\int_{\partial\mathbb{R}_{n}^{+}}\left((U+\delta_{m}v_{q_{m}})^{+}\right)^{\frac{2}{n-2}}\tilde{\phi}_{m}j_{k}dz+o(1)=n\int_{\partial\mathbb{R}_{n}^{+}}\left(U\right)^{\frac{2}{n-2}}\tilde{\phi}j_{k}dz+o(1)=o(1)
\end{multline*}
since $\tilde{\phi}_{m}\rightharpoonup\tilde{\phi}$ weakly $L^{\frac{2(n-1)}{n-2}}(\partial\mathbb{R}_{+}^{n})$,
$\|v_{q_{m}}\|_{L^{\infty}}$ is bounded independently on $q_{m}$
by (\ref{eq:gradvq}) and by equation (\ref{eq:L3}). At this point,
since
\[
\left\langle \left\langle Z_{\delta_{m},q_{m}}^{i},Z_{\delta_{m},q_{m}}^{j}\right\rangle \right\rangle _{H}=C\delta_{ij}+o(1),
\]
we conclude that $c_{m}^{i}\rightarrow0$ while $m\rightarrow\infty$
for each $i=1,\dots,n$. By (\ref{eq:L4}), and recalling $\|L_{\varepsilon_{m}\lambda_{m},q_{m}}(\phi_{\varepsilon_{m}\lambda_{m},q_{m}})\|_{H}\rightarrow0$
this implies
\begin{multline}
\left\Vert \phi_{\delta_{m},q_{m}}-i^{*}\left(f_{\varepsilon}'(W_{\delta_{m},q_{m}}+\delta_{m}V_{\delta_{m},q_{m}})[\phi_{\delta_{m},q_{m}}]\right)\right\Vert _{H}\\
=\sum_{i=0}^{n-1}c_{m}^{i}\|Z^{i}\|_{H}+o(1)=o(1)\label{eq:L5}
\end{multline}
Now, choose a smooth function $\varphi\in C_{0}^{\infty}(\mathbb{R}_{+}^{n})$
and define 
\[
\varphi_{m}(x)=\frac{1}{\delta_{m}^{\frac{n-2}{2}}}\varphi\left(\frac{1}{\delta_{m}}\left(\psi_{q_{m}}^{\partial}\right)^{-1}(x)\right)\chi\left(\left(\psi_{q_{m}}^{\partial}\right)^{-1}(x)\right)\text{ for }x\in M.
\]
We have that $\|\varphi_{m}\|_{H}$ is bounded and, by (\ref{eq:L5}),
that 
\begin{align*}
\left\langle \left\langle \phi_{\delta_{m},q_{m}},\varphi_{m}\right\rangle \right\rangle _{H}= & \int_{\partial M}f_{\varepsilon_{m}}'(W_{\delta_{m},q_{m}}+\delta_{m}V_{\delta_{m},q_{m}})[\phi_{\delta_{m},q_{m}}]\varphi_{m}d\sigma\\
 & +\left\langle \left\langle \phi_{\delta_{m},q_{m}}-i^{*}\left(f_{\varepsilon_{m}}'(W_{\delta_{m},q_{m}}+\delta_{m}V_{\delta_{m},q_{m}})[\phi_{\delta_{m},q_{m}}]\right),\varphi_{m}\right\rangle \right\rangle _{H}\\
= & \int_{\partial M}f_{\varepsilon_{m}}'(W_{\delta_{m},q_{m}}+\delta_{m}V_{\delta_{m},q_{m}})[\phi_{\delta_{m},q_{m}}]\varphi_{m}d\sigma+o(1)\\
= & n\int_{\partial\mathbb{R}_{+}^{n}}\left((U+\delta_{m}v_{q_{m}})^{+}\right)^{\frac{2}{n-2}}\tilde{\phi}_{m}\varphi dz+o(1)\\
= & n\int_{\mathbb{R}^{n-1}}U^{\frac{2}{n-2}}\tilde{\phi}\varphi dz+o(1),
\end{align*}
by the strong $L_{\text{loc}}^{t}(\partial\mathbb{R}_{+}^{n})$ convergence
of $\tilde{\phi}_{m}$ for $t<\frac{2(n-1)}{n-2}$. On the other hand
\[
\left\langle \left\langle \phi_{\delta_{m},q_{m}},\varphi_{m}\right\rangle \right\rangle _{H}=\int_{\mathbb{R}_{+}^{n}}\nabla\tilde{\phi}\nabla\varphi d\eta+o(1),
\]
so $\tilde{\phi}$ is a weak solution of (\ref{eq:linearizzato})
and we conclude that
\[
\tilde{\phi}\in\text{Span}\left\{ j_{1},\dots j_{n}\right\} .
\]
This, combined with (\ref{eq:L3}) gives that $\tilde{\phi}=0$. Proceeding
as before we have
\begin{align*}
\left\langle \left\langle \phi_{\delta_{m},q_{m}},\phi_{\delta_{m},q_{m}}\right\rangle \right\rangle _{H}= & \int_{\partial M}f_{\varepsilon_{m}}'(W_{\delta_{m},q_{m}}+\delta_{m}V_{\delta_{m},q_{m}})[\phi_{\delta_{m},q_{m}}]\phi_{\delta_{m},q_{m}}d\sigma+o(1)\\
= & n\int_{\partial\mathbb{R}_{+}^{n}}\left((U+\delta_{m}v_{q_{m}})^{+}\right)^{\frac{2}{n-2}}\tilde{\phi}_{m}^{2}dz+o(1)\\
= & n\int_{\partial\mathbb{R}_{+}^{n}}U^{\frac{2}{n-2}}\tilde{\phi}_{m}^{2}dz+o(1)=o(1)
\end{align*}
since $\tilde{\phi}_{m}^{2}$ converges weakly in $L^{\frac{n-1}{n-2}}(\partial\mathbb{R}_{+}^{n})$.
This gives $\left\Vert \phi_{\delta_{m},q_{m}}\right\Vert _{H}\rightarrow0$,
that is a contradiction.
\end{proof}
We have (see \cite[Lemma 9.4 and Lemma 9.5]{A3}) the following relations
\begin{rem}
\label{lem:I-a-m}It holds
\begin{align*}
I_{m}^{\alpha}:=\int_{0}^{\infty}\frac{\rho^{\alpha}}{(1+\rho^{2})^{m}}d\rho=\frac{2m}{\alpha+1}I_{m+1}^{\alpha+2} & \text{ for }\alpha+1<2m\\
I_{m}^{\alpha}=\frac{2m}{2m-\alpha-1}I_{m+1}^{\alpha} & \text{ for }\alpha+1<2m\\
I_{m}^{\alpha}=\frac{2m-\alpha-3}{\alpha+1}I_{m}^{\alpha+2} & \text{ for }\alpha+3<2m.
\end{align*}
In particular we have $I_{n}^{n}=\frac{n-3}{2(n-1)}I_{n-1}^{n}$,
$I_{n-1}^{n-2}=\frac{n-3}{n-1}I_{n-1}^{n}$, $I_{n-2}^{n-2}=\frac{2(n-2)}{n-1}I_{n-1}^{n}$.

Moreover, for $m>k+1$, $m,k\in\mathbb{N}$, we have
\[
\int_{0}^{\infty}\frac{t^{k}}{(1+t)^{m}}dt=\frac{k!}{(m-1)(m-2)\cdots(m-k-1)}
\]
and, by explicit computation, by the previous formula, we obtain:
\[
\int_{\mathbb{R}_{+}^{n}}\frac{dzdt}{\left[(1+t)^{2}+|z|^{2}\right]^{n-1}}=\frac{\omega_{n-1}I_{n-1}^{n-2}}{(n-2)}
\]
\[
\int_{\mathbb{R}_{+}^{n}}\frac{|z|^{2}t^{2}dzdt}{\left[(1+t)^{2}+|z|^{2}\right]^{n}}=\frac{2\omega_{n-1}I_{n}^{n}}{(n-2)(n-3)(n-4)}
\]
\[
\int_{\mathbb{R}_{+}^{n}}\frac{t^{2}dzdt}{\left[(1+t)^{2}+|z|^{2}\right]^{n-1}}=\frac{2\omega_{n-1}I_{n-1}^{n-2}}{(n-2)(n-3)(n-4)}
\]
\[
\int_{\mathbb{R}_{+}^{n}}\frac{|z|^{2}dzdt}{\left[(1+t)^{2}+|z|^{2}\right]^{n-1}}=\frac{\omega_{n-1}I_{n-1}^{n}}{(n-4)}
\]
\end{rem}

\end{document}